\newtheorem{theorem}{Theorem}[section]
\newtheorem{lemma}[theorem]{Lemma}
\newtheorem{corollary}[theorem]{Corollary}
\newtheorem{proposition}[theorem]{Proposition}
\theoremstyle{definition}
\newtheorem{definition}[theorem]{Definition}
\newtheorem{remark}[theorem]{Remark}
\newcommand{\Seg}{\rm Seg}
\renewcommand{\L}{{\mathcal L}}
\newcommand{\pgrass}{{\mathbb G}}
\newcommand{\G}{{\mathbb G}}
\newcommand{\KK}{{\mathbb K}}
\newcommand{\pspace}{{\mathbb P}}
\newcommand{\PP}{{\mathbb P}}
\def\geq{\geqslant}
\def\leq{\leqslant}
\begin{document}
 
\title[Unirationality of some varieties]{Unirationality of varieties described by families of projective hypersurfaces}

\author{Ciro Ciliberto}
\address{Dipartimento di Matematica, Universit\`a di Roma Tor Vergata, Via O. Raimondo,
 00173 Roma, Italy}
\email{cilibert@axp.mat.uniroma2.it}

\author{Duccio Sacchi}
\address{Via Filippo Turati 90, 53014 Monteroni d'Arbia, Siena, Italy}
\email{ ducciosacchi89@gmail.com}
 
\subjclass{Primary 14E08; Secondary 14M15, 14M20}
 
\keywords{Grassmannians, hypersurfaces, unirationality.}
 
\maketitle

\begin{abstract} Let $\mathscr{X}\to W$ be a flat family of generically irreducible hypersurfaces of degree $d\geq 2$ in $\PP^n$ with singular locus of dimension $t$,  with $W$ unirational of dimension $r$. We prove that if $n$ is large enough with respect to $d$, $r$  and $t$, then $\mathscr{X}$ is unirational. This extends results in \cite {Pre, HMP}.   
\end{abstract}

\section*{Introduction} 

A classical theorem by U. Morin says that if $X$ is a general hypersurface of degree $d$ in $\PP^n$ and $n$ is large enough with respect to $d$, then $X$ is unirational (see \cite{Mor1} and also \cite {Cil}). This result has been extended by A. Predonzan in \cite {Pre} to any hypersurface $X$ of degree $d$ in $\PP^n$, even singular in dimension $t$: $X$  turns out to be unirational provided $n$ is large enough with respect to $d$ and $t$ (see Theorem \ref {thm:pred1} for a precise statement). This theorem has been rediscovered by J. Harris, B. Mazur and R. Pandharipande in \cite {HMP}, although they give a lower bound for $n$ that is worse than Predonzan's one. 

The purpose of this paper is to prove an extension of Predonzan's result, namely Theorem 
\ref {thm:pred2}, that asserts that if $\mathscr{X}\to W$ is a flat family of hypersurfaces of degree $d\geq 2$ in $\PP^n$, whose general member is irreducible and singular in dimension $t$, and $W$ is irreducible, unirational of dimension $r$, if $n$ is large enough with respect to $d,r,t$, then $\mathscr{X}$ is unirational. The case $d=2$ is already contained in \cite{conf:unisec}. This theorem, for instance, implies, under suitable numerical conditions, the unirationality of hypersurfaces in Segre products of projective spaces. 

As for the proof,    Predonzan shows in \cite {Pre} that if  $X\subset \PP^n$ is an irreducible hypersurface of degree $d\geq 2$, defined over a field $\KK$ of characteristic zero, containing  a $k$--plane $\Lambda$ along which $X$ is smooth, and if $k$ is large enough with respect to the degree $d$, then $X$ is unirational over the extension of $\KK$ with the Pl\"ucker coordinates of $\Lambda$ (see Theorem \ref {thm:pred} for a precise statement). The key step in our proof is to show that if  $\mathscr{X}\to W$ is a flat family of generically irreducible hypersurfaces of degree $d\geq 2$ in $\PP^n$,  with $W$ irreducible of dimension $r$,  and $n$ is large enough with respect to $d,r,k$, then there is a rationally determined $k$--plane over the generic hypersurface of the family (see Remark \ref {rem:ratdet} for the meaning of being \emph{rationally determined}). This is the so called Section Lemma (see Lemma \ref {rat_sec_F} below). Theorem \ref {thm:pred2} follows by this result and the aforementioned Predonzan's theorem, in view of a unirationality criterion by L. Roth (see Proposition \ref {criterion}). The proof of the Section Lemma   is inspired to a beautiful and elegant idea of F. Conforto's in \cite {conf:unisec}, and it uses a birational description of 
the Fano scheme of $k$--planes  in a projective hypersurface, contained in \S 
\ref {sec:fano}. This in turn requires some preliminaries about Grassmannians contained  in \S \ref {sec:grass}, that essentially appear in a paper by J. G. Semple \cite {sempl_rep_gr}, and which we expose here for the reader's convenience. 

This paper extends some of the results by the second author in his Ph. D. Thesis \cite{Sacchi}. 

In this paper we work over an algebraically closed field $\KK$ of characteristic zero.

\medskip

{\bf Acknowledgements:} Ciro Ciliberto is a member of GNSAGA of INdAM and he acknowledges support from  the MIUR Excellence Department Project awarded to the Department of Mathematics, University of Rome Tor Vergata, 
CUP E83C18000100006.

\section{Some preliminaries on Grassmannians}\label{sec:grass}

In this section we expose some preliminaries on Grassmann varieties, following   \cite{sempl_rep_gr}. 

\subsection{} Let $\pgrass(k,n)$ be the Grassmann variety of $k$--planes in $\pspace^n=\pspace(V)$, where $V$ is a $\KK$--vector space of dimension $n+1$.
One has $\G(k,n)\cong \G(n-k-1,n)$, hence, without loss of generality, we may and will assume $2k<n$.  

The variety $\pgrass(k,n)$ is naturally embedded in $\PP^{N(k,n)}$, with $N(k,n)=\binom{n+1}{k+1}-1$,  via the \emph{Pl\"ucker embedding}. Explicitely, in coordinates, we have the following. Fix a basis $B=\{e_1, \ldots, e_{n+1}\}$ of $V$. Then we can associate to any $k$--plane $\Lambda$ of $\PP^n=\PP(V)$ a $(k+1)\times (n+1)$ matrix 
$$
M_\Lambda=\begin{bmatrix}
v_{1,1}   & \ldots & v_{1,n+1}    \\
\vdots    &        & \vdots       \\
v_{k+1,1} & \ldots & v_{k+1, n+1} \\
\end{bmatrix}
$$
whose rows are the coordinate vectors with respect to the basis $B$ of $k+1$ vectors corresponding to independent points of $\Lambda$. Two matrices $M$ and $M'$ represent the same $k$--plane if and only if there exists $A \in {\rm GL}(k+1,\KK)$ such that $M=AM'$.

The homogeneous coordinates of the point in $\pspace^{N(k,n)}$ corresponding to $\Lambda$ are given by the minors of order $k+1$ of $M_{\Lambda}$. They depend only on $\Lambda$ and not on the matrix $M_{\Lambda}$. These are the \emph{Pl\"ucker coordinates} of $\Lambda$, and we denote them by $z_I$ where $I=(i_1, \ldots, i_{k+1})$ is a multi-index with $1\leq i_1< i_2 < \ldots < i_{k+1} \leq n+1$ denoting the order of the columns of $M_{\Lambda}$ which determine the corresponding minor. The Pl\"ucker coordinates are lexicographically ordered. 

If we consider the subset $U$ of points of $\pspace^{N(k,n)}$ where the first coordinate $z_{1,\ldots, k+1}$ is different from zero, each point $\Lambda \in U \cap \pgrass(k,n)$ represents a $k$--plane such that there is and associated matrix $M_\Lambda$ to $\Lambda$ that can be uniquely written in the form
\begin{equation*}
\begin{bmatrix}
1      & 0 & \ldots & 0      & v_{1,k+2}   & \ldots & v_{1,n+1}    \\
0      & 1 & \ldots & 0      & v_{2,k+2}   & \ldots & v_{2,n+1}    \\
\vdots &   & \ddots & \vdots & \vdots      &        & \vdots       \\
0      & 0 & \ldots & 1      & v_{k+1,k+2} & \ldots & v_{k+1, n+1} \\
\end{bmatrix}.
\end{equation*}
From this description it follows that $U \cap \pgrass(k,n)$ is isomorphic to $\mathbb{A}^{(k+1)(n-k)}$, where the coordinates are the (lexicographically ordered) $v_{i,j}$'s, with $1\leq i\leq k+1$ and $k+2\leq j\leq n+1$.  We will soon give a geometric interpretation of this isomorphism (see Proposition \ref {map_fromGtoP} below).

\begin{remark}\label{ext_algebra_open}
We can describe geometrically the open subset $U\cap \pgrass(k,n)$: it is the set of  $k$--planes of $\pspace^n$  that  do not intersect the $(n-k-1)$--plane spanned by the points corresponding to $e_{k+2}, \ldots, e_{n+1}$. Similarly, for any choice of a totally decomposable element of $\wedge^{n-k}V$ (i.e., a vector which can be expressed as $v_1\wedge \ldots \wedge v_{n-k}$) we can  construct a birational map between $\pgrass(k,n)$ and $\pspace^{(k+1)(n-k)}$.
\end{remark}

\subsection{} Now we set $M(k,n)=(k+1)(n-k)$ and consider $\pspace^{M(k,n)}$ with homogeneous coordinates given by $y$ and $x_{i,j}$ for $i=1, \ldots, k+1$ and $j=k+2, \ldots, n+1$: in this setting the affine space with coordinates $x_{i,j}$ for $i=1, \ldots, k+1$ and $j=k+2, \ldots, n+1$ is the complement of the hyperplane $H$ with equation $y=0$. We define the rational map
\begin{equation}\label{map_fromPtoG}
\psi_{k,n}: \pspace^{M(k,n)} \dashrightarrow \pspace^{N(k,n)}
\end{equation}
sending the point with coordinates $[y, x_{1,k+2}, \ldots, x_{k+1,n+1}]$ to the point whose coordinates are the minors of order $k+1$ of the matrix
\begin{equation}
\begin{bmatrix}\label{matrix_fromPtoG}
y      & 0 & \ldots & 0      & x_{1,k+2}   & \ldots & x_{1,n+1}    \\
0      & y & \ldots & 0      & x_{2,k+2}   & \ldots & x_{2,n+1}    \\
\vdots &   & \ddots & \vdots & \vdots      &        & \vdots       \\
0      & 0 & \ldots & y      & x_{k+1,k+2} & \ldots & x_{k+1, n+1} \\
\end{bmatrix}.
\end{equation}

If we consider the open subset $U' = \{y \neq 0\} \subset \pspace^{M(k,n)}$, $\psi_{k,n}|_{U'}$ is the inverse isomorphism  of the one described above between $U \cap \pgrass(k,n)$ and $\mathbb{A}^{M(k,n)}$. So the image of $\psi_{k,n}$ is $\G(k,n)$. 

We will describe the linear system $\mathfrak{d}_{k,n}$ of hypersurfaces associated to the map $\psi_{k,n}$. It corresponds to the vector space $W \subset H^0(\pspace^M,\mathscr{O}_{\pspace^M}(k+1))$ spanned by $y^{k+1}$ and by the forms
\begin{equation}\label{hyp_sl_expl}
y^{k+1-r}D^r_{i_1,\ldots,i_r;j_1, \ldots, j_r}
\end{equation}
for $r=1, \ldots, k+1$, with $1 \leq i_1 < \cdots <i_r \leq k+1$ and 
$k+2 \leq j_1 < \cdots < j_r \leq n+1$, where  $D^r_{i_1,\ldots,i_r; j_1, \ldots, j_r}$ denotes the minor of order $r$  of the matrix
\begin{equation}\label{eq:matt}
\begin{bmatrix}
x_{1,k+2}   & \ldots & x_{1,n+1}    \\
x_{2,k+2}   & \ldots & x_{2,n+1}    \\
\vdots      &        & \vdots       \\
x_{k+1,k+2} & \ldots & x_{k+1, n+1} \\
\end{bmatrix}
\end{equation}
determined by the rows of place $i_1,\ldots,i_r$ and by the columns of place $j_1, \ldots, j_r$.

For a fixed $r = 1, \ldots, k+1$, we define $m_r$ as the number of the minors of type $D^r$. Thus, $m_r=\binom{k+1}{r}\binom{n-k}{r}$.

Note that the subvariety of $H$ defined by the $2\times 2$ minors of the matrix \eqref {eq:matt} is a Segre variety $\Seg(k,n-k-1)\cong \PP^k\times \PP^{n-k-1}$ (see \cite [p. 98] {har}).

We want to geometrically characterize the  linear system $\mathfrak{d}_{k,n}$. Before doing that, we need the following:

\begin{lemma}\label{lem:fac} Let $r\geq 1$. There is no hypersurface of degree $r+2$ in $\PP^r$ with multiplicity at least $r+1$ at $r+1$ independent assigned points of $\PP^r$.
\end{lemma}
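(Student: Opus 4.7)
The plan is to reduce to a direct monomial computation by exploiting the freedom to choose coordinates adapted to the $r+1$ independent points.

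First, since the $r+1$ points are linearly independent, a projective change of coordinates lets me assume they are the standard basis points $e_0,\ldots,e_r$ of $\PP^r$. I then write a candidate $F$ of degree $d=r+2$ as a sum
\[
F=\sum_{|a|=d} c_a\, x_0^{a_0}\cdots x_r^{a_r}.
\]
The main computational point is that a single monomial $x^a=\prod x_j^{a_j}$ has multiplicity exactly $d-a_i$ at $e_i$. Indeed, dehomogenizing in the affine chart $x_i=1$ turns $x^a$ into $\prod_{j\ne i} y_j^{a_j}$, whose order at the origin is $\sum_{j\ne i} a_j=d-a_i$.

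Next I would observe that the dehomogenizations of distinct monomials $x^a$ in the chart $x_i=1$ remain distinct monomials in the $y_j$'s (since the total degree $d$ is fixed, the dropped exponent $a_i$ is determined by the remaining ones). Consequently there is no cancellation among them, and
\[
\mult_{e_i}(F)=\min\{\,d-a_i : c_a\ne 0\,\}.
\]
Imposing $\mult_{e_i}(F)\ge r+1$ for every $i=0,\ldots,r$ therefore forces $a_i\le d-(r+1)=1$ for every index $i$ and every monomial actually appearing in $F$.

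Finally, if every exponent $a_i$ is at most $1$, then the total degree of any such monomial is at most $r+1$, strictly less than $d=r+2$. This contradicts $\deg F=d$, so no nonzero $F$ exists. I do not foresee a genuine obstacle here: once the points are normalised to the standard basis, the argument is essentially the observation that the complete intersection $\{x_i^2=0\}_{i=0}^{r}$ has no degree-$(r+2)$ element, since such a form would need all exponents at most $1$ yet total degree $r+2>r+1$.
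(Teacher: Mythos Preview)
Your argument is correct and complete. After normalising the $r+1$ points to the coordinate vertices, the key observation that distinct degree--$d$ monomials remain distinct after setting $x_i=1$ is exactly what guarantees no cancellation, so the multiplicity at $e_i$ really is $\min\{d-a_i:c_a\neq 0\}$. The conclusion $a_i\le 1$ for all $i$, hence $\sum a_i\le r+1<r+2$, is then immediate.

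The paper takes a genuinely different, more geometric route. It invokes the standard Cremona transformation of $\PP^r$ based at the $r+1$ coordinate points: the homaloidal system of degree--$r$ hypersurfaces with multiplicity $r-1$ at each point gives a birational $\omega:\PP^r\dashrightarrow\PP^r$, and the preimages of lines are rational normal curves of degree $r$ through all $r+1$ points. A hypersurface of degree $r+2$ with multiplicity $\ge r+1$ at each point would meet such a curve, off the base points, in $r(r+2)-(r+1)(r+1)=-1$ points, forcing the hypersurface to contain every such curve and hence to be all of $\PP^r$. Your approach is strictly more elementary---it needs nothing beyond the combinatorics of monomials and avoids any appeal to Cremona maps or intersection numbers---while the paper's argument has the virtue of fitting the ambient geometric style of the section and generalising readily to other questions about linear systems with assigned multiple points (e.g.\ via Bezout-type arguments with moving curves). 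Either proof is perfectly adequate here.
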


\begin{proof} Consider the linear system of $\PP^r$ of hypersurfaces of degree $r$ with multiplicity at least $r-1$ at the $r+1$ independent assigned points.  This is well known to be a homaloidal linear system, determining a birational map $\omega: \PP^r\dasharrow \PP^r$, such that the counterimages of the lines of the target $\PP^r$  are the rational normal curves in the domain $\PP^r$ passing through the $r+1$ independent assigned points.  The intersection of a  hypersurface of degree $r+2$ in $\PP^r$ with multiplicity at least $r+1$ at the $r+1$ independent assigned points with these rational normal curve off the $r+1$ independent assigned points is $-1$. So such a hypersurface is empty and the assertion follows.
\end{proof}

Next we can give the desired geometric description of the  linear system $\mathfrak{d}_{k,n}$:

\begin{proposition}\label{lin_sist_hyp}
The linear system $\mathfrak{d}_{k,n}$ consists of the hypersurfaces of degree $k+1$ in $\pspace^{M(k,n)}$ passing with multiplicity at least $k$ through the Segre variety $\Seg(k,n-k-1)$ contained in the hyperplane $H$ with equation $y=0$ and defined by the $2	\times 2$ minors of the matrix \eqref {eq:matt}. 
\end{proposition}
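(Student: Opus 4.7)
I would prove the equality by establishing both inclusions. Let $\Sigma = \Seg(k,n-k-1)\subset H$ and let $\mathfrak{L}$ denote the linear system of hypersurfaces of degree $k+1$ in $\PP^{M(k,n)}$ passing with multiplicity at least $k$ through $\Sigma$. The key tool throughout is the generic-point valuation $\nu_\Sigma$ in the regular local ring $\O_{\PP^{M(k,n)},\Sigma}$, which is additive on products (its associated graded is a polynomial ring over the residue field, hence an integral domain).

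For $\mathfrak{d}_{k,n}\subseteq\mathfrak{L}$: check each generator. Since $H=\{y=0\}$ is smooth along $\Sigma$, one has $\nu_\Sigma(y)=1$, so $\nu_\Sigma(y^{k+1})=k+1\geq k$. Moreover $\Sigma$ itself is smooth, so symbolic and ordinary powers of its ideal coincide, and the classical determinantal identity placing an $r\times r$ minor of a generic matrix in the $(r-1)$-st power of the ideal generated by its $2\times 2$ minors yields $\nu_\Sigma(D^r_{I,J})\geq r-1$. Therefore $\nu_\Sigma(y^{k+1-r}D^r_{I,J})\geq(k+1-r)+(r-1)=k$.

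For the reverse inclusion, take $F$ of degree $k+1$ with $\nu_\Sigma(F)\geq k$ and decompose $F=\sum_{r=0}^{k+1}y^{k+1-r}G_r(x)$ where $G_r$ has degree $r$ in the $x_{i,j}$'s only. Additivity forces $\nu_\Sigma(G_r)\geq r-1$ for every $r$. The cases $r=0,1$ are automatic and produce scalar multiples of $y^{k+1}$ and of $y^kx_{i,j}=y^kD^1_{i,j}$, all in $\mathfrak{d}_{k,n}$. For $r\geq 2$, I will show $G_r$ lies in the span of the $r\times r$ minors $D^r_{I,J}$. Since the ideal of the determinantal variety $V_{r-1}\subset H$ of matrices of rank $\leq r-1$ is generated in its minimal degree $r$ by these linearly independent minors (classical determinantal result), this reduces to proving $G_r$ vanishes on $V_{r-1}$. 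For $r=2$ this is immediate, as $V_1=\Sigma$. For $r\geq 3$, pick a general point $q\in V_{r-1}$ of rank exactly $r-1$, write $q=p_1+\cdots+p_{r-1}$ with $p_i\in\Sigma$ in general position, and let $\Lambda\cong\PP^{r-2}$ be their linear span. The restriction $G_r|_\Lambda$ is a hypersurface of degree $r$ in $\PP^{r-2}$ with multiplicity at least $r-1$ at each of the $r-1$ independent points $p_i$, precisely the configuration Lemma \ref{lem:fac} (applied with its parameter equal to $r-2$) rules out. Hence $G_r|_\Lambda\equiv 0$ and in particular $G_r(q)=0$; generality of $q$ then gives $G_r|_{V_{r-1}}=0$.

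The main obstacle will be the clean reduction to Lemma \ref{lem:fac}: one must identify the secant $(r-2)$-plane $\Lambda$ through a generic rank-$(r-1)$ point of $V_{r-1}$ and recognize that the restriction to $\Lambda$ has exactly the forbidden multiplicity profile. Secondary checks include the additivity of the generic-point valuation along the smooth center $\Sigma$ (standard for regular local rings), the classical fact that the $r\times r$ minors lie in the $(r-1)$-st power of the ideal of $\Sigma$, and the determinantal generation of $I_{V_{r-1}}$ in its minimal degree $r$.
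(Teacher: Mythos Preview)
Your argument is correct and shares with the paper the decisive use of Lemma~\ref{lem:fac} on secant planes of $\Sigma$, but the architecture differs. The paper works globally: it identifies the (reduced) base locus $B_1$ of $\mathfrak{d}_{k,n}$ as the $(k-1)$-st secant variety of $\Sigma$ inside $H$, records that the deeper base loci $B_r$ are the lower secant varieties, and for the reverse inclusion applies Lemma~\ref{lem:fac} \emph{once} (with parameter $k-1$) to force any $F\in\mathfrak{L}$ to contain $B_1$, then concludes via the assertion that $\mathfrak{d}_{k,n}$ is the complete system of degree-$(k+1)$ hypersurfaces through $B_1$. You instead stratify by the $y$-grading $F=\sum_r y^{k+1-r}G_r$ and, for each $r\geq 2$, apply Lemma~\ref{lem:fac} with parameter $r-2$ to show $G_r$ vanishes on the rank-$(r-1)$ locus $V_{r-1}$, whence $G_r$ lies in the span of the $r\times r$ minors by the classical description of $I(V_{r-1})_r$. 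Your route trades one invocation of the lemma for several, but each step is transparent and you avoid having to justify the paper's claim about the complete linear system through $B_1$. Two small remarks on your write-up: the equality of symbolic and ordinary powers for a smooth subvariety is a \emph{local} statement (Zariski--Nagata), not a global one, but only the symbolic side is needed and $\nu_\Sigma(D^r)\geq r-1$ is indeed correct; and ``additivity'' alone does not yield $\nu_\Sigma(G_r)\geq r-1$ from $\nu_\Sigma(F)\geq k$ --- the point is rather that in the associated graded of $\mathcal{O}_{\PP^{M(k,n)},\Sigma}$ the initial forms of the terms $y^{k+1-r}G_r$ carry distinct $y$-degrees and so cannot cancel.
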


\begin{proof}
The linear system $\mathfrak{d}_{k,n}$ has a base locus scheme $B_1$. By looking at  the basis of $W$ in \eqref{hyp_sl_expl},
$B_1$ is defined by the equations
$$
y=0, \quad D^{k+1}_{1,\ldots,{k+1}; j_1, \ldots, j_{k+1}}, \quad \text{for all}\quad k+2 \leq j_1 < \ldots < j_r \leq n+1.
$$
Then $B_1$ is the $(k-1)$--th secant variety of $\Seg(k,n-k-1)$ defined by the $2\times 2$ minors of the matrix \eqref {eq:matt} inside $H$ (see \cite [p. 99]{har}). Moreover 
$\mathfrak{d}_{k,n}$ is  the whole linear system of hypersurfaces of degree $k+1$ of 
$\pspace^{M(k,n)}$ containing $B_1$.

For each $r=2, \ldots, k$, we can also consider the subscheme $B_r$ of $B_1$ consisting of those points of $B_1$ where all hypersurfaces of $\mathfrak{d}_{k,n}$ have multiplicity at least $r$. Looking again at  the basis of $W$ in \eqref{hyp_sl_expl}, it is immediate that $B_r$ is the $(k-r)$--secant variety of $\Seg(k,n-k-1)$, for $r=2, \ldots, k$ (see again \cite [p. 99]{har}). Note that $B_1$ itself has points of multiplicity at least $r$ along $B_r$, for all $r=2, \ldots, k$. In particular, each hypersurface in the linear system $\mathfrak{d}_{k,n}$ passes with multiplicity $k$ through the Segre variety $\Seg(k,n-k-1)$ in $H$, which is $B_k$.

Conversely, let $F$ be a hypersurface of degree $k+1$ in $\pspace^{M(k,n)}$ passing with multiplicity at least $k$ through the Segre variety $\Seg(k,n-k-1)$ contained in the hyperplane $H$ and defined by the $2\times 2$ minors of the matrix \eqref {eq:matt}.
Then we claim that $F$ contains the $(k-1)$--th secant variety of $\Seg(k,n-k-1)$, hence $F$ belongs to $\mathfrak{d}_{k,n}$. Indeed, this is clear for $k=1$, so we may assume $k\geq 2$, 
in which case the claim follows right away by  Lemma \ref {lem:fac}. 
\end{proof}

\subsection{} Next we need a description of the \emph{osculating spaces} to the Grassmann varieties (for the concept of osculating spaces see \cite[p.141]{rus}).  First we need some lemmata.

\begin{lemma}\label{rnc_in_Gr}
Let $\Seg(1,k)$ be a Segre variety in $\pspace^n$, with $n\geq 2k+1$. Let $\phi: \mathbb{P}^1 \rightarrow \pgrass(k,n)$ be the morphism which sends a point $p$ to the $k$--plane $\{p\}\times \pspace^{k}\subset \Seg(1,k)$ in $\PP^n$. Then the image of $\phi$ is a rational normal curve of degree $k+1$ inside $\pgrass(k,n)$.
\end{lemma}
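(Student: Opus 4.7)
The plan is to choose coordinates adapted to the Segre embedding and compute the Pl\"ucker image directly. Fix a basis $e_1,\dots,e_{n+1}$ of $V$ so that $\Seg(1,k)$ lies in the $\PP^{2k+1}$ spanned by $e_1,\dots,e_{2k+2}$ via the standard Segre embedding $([s:t],[u_0:\dots:u_k])\mapsto \sum_{i=0}^{k}(su_i\,e_{i+1}+tu_i\,e_{k+2+i})$. Then for $p=[s:t]$ the fiber $\{p\}\times\PP^k$ is spanned by the $k+1$ independent points $se_{i}+te_{k+1+i}$, $i=1,\dots,k+1$, so $\phi([s:t])$ is represented by the $(k+1)\times(n+1)$ matrix
\[
M_{[s:t]}=\bigl[\,sI_{k+1}\,\big|\,tI_{k+1}\,\big|\,0\,\bigr],
\]
with a trailing block of $n-2k-1\geq 0$ zero columns.

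Next I would compute the Pl\"ucker coordinates of $\phi([s:t])$ as the $(k+1)\times(k+1)$ minors of $M_{[s:t]}$. A minor indexed by a column subset $I\subset\{1,\dots,n+1\}$ of size $k+1$ vanishes unless $I\subset\{1,\dots,2k+2\}$; writing $I_1=I\cap\{1,\dots,k+1\}$ and $I_2'=\{j-(k+1):j\in I\cap\{k+2,\dots,2k+2\}\}$, the corresponding submatrix has exactly one nonzero entry in each column, so its determinant is nonzero precisely when $I_1\sqcup I_2'=\{1,\dots,k+1\}$, in which case the minor equals $\pm s^{|I_1|}t^{k+1-|I_1|}$.

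Summing up, for each $j\in\{0,1,\dots,k+1\}$ there are exactly $\binom{k+1}{j}$ nonvanishing Pl\"ucker coordinates of $\phi([s:t])$, all equal up to sign to the monomial $s^jt^{k+1-j}$; all remaining Pl\"ucker coordinates vanish identically. Therefore $\phi(\PP^1)$ lies in a linear subspace $L\cong\PP^{k+1}\subset\PP^{N(k,n)}$ cut out by identifying (up to sign) the coordinates associated to a common monomial and by setting the others to zero, and in a suitable basis of $L$ the morphism $\phi$ reads $[s:t]\mapsto[s^{k+1}:s^kt:\dots:st^k:t^{k+1}]$, which is the standard rational normal curve of degree $k+1$. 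The only point requiring care is the sign bookkeeping in the minor computation, but since the image is determined up to projective equivalence by the monomial pattern, the signs play no essential role and I foresee no genuine obstacle.
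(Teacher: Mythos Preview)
Your proof is correct and follows essentially the same approach as the paper: both write the fiber $\{p\}\times\PP^k$ as the row space of the matrix $[\,sI_{k+1}\mid tI_{k+1}\,]$ (the paper first reduces to $n=2k+1$, you carry the trailing zero block) and compute the Pl\"ucker minors directly. Your count of $2^{k+1}$ nonvanishing minors, one for each partition $\{1,\dots,k+1\}=I_1\sqcup I_2'$, grouped into $k+2$ classes each equal (up to sign) to a monomial $s^jt^{k+1-j}$, is in fact more accurate than the paper's claim of ``only $k+2$ non-vanishing Pl\"ucker coordinates''; the conclusion is the same since the image lands in a $\PP^{k+1}$ via the Veronese map.
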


\begin{proof}
We can assume $n=2k+1$.  If $[x_0, x_1]$ are  homogeneous coordinates of $\pspace^1$ and $z_{ij}$, $i=0,1$ and $j=0,\ldots, k$, are the homogenous coordinates of $\pspace^{2k+1}$, we can assume that $\phi$ is the map which sends $[\alpha_0,\alpha_1]$ to the $k$-plane whose equations in $\pspace^{2k+1}$ are $\alpha_1z_{0j}-\alpha_0z_{1j}=0$ for $j=0,\ldots,k$. In particular, the image of a point $[x_0,x_1]$ under $\phi$ is the point of $\pgrass(k,2k+1)$ whose coordinates are the minors of maximal order of the matrix
\begin{equation*}\label{matrix_rnc}
\begin{bmatrix}
x_0     & 0     & \ldots & 0     & x_1   & 0     & \ldots & 0  \\
0         & x_0 & \ldots & 0     & 0       & x_1 & \ldots & 0  \\
\vdots  &        & \ddots&       &          &        & \ddots & \vdots\\
0         & 0     & \ldots & x_0 & 0       & 0     & \ldots & x_1  \\
\end{bmatrix}.
\end{equation*}
There are only $k+2$ non-vanishing Pl\"ucker coordinates of this $k$--plane and they have as entries the monomials of degree $k+1$ in $x_0$ and $x_1$. The assertion follows.
\end{proof}

We can generalize the above result:

\begin{lemma}\label{gen_rnc_in_GR}
Let $k,r,n$ be positive integers with $k>r$. Let $\Seg(1,r)$ be a Segre variety in $\pspace^{n}$, with $n\geq k+r+1$,  and let $\Pi$ be a $(k-r-1)$--plane,  which does not intersect the $(2r+1)$--plane spanned by $\Seg(1,r)$. Let $\phi: \mathbb{P}^1 \rightarrow \pgrass(k,n)$ be the morphism which sends a point $p$ to the $k$--plane spanned by $\Pi$ and by the $r$--plane in $\Seg(1,r)$ given by $\{p\} \times \pspace^r$. Then its image is a rational normal curve of degree $r+1$ inside $\pgrass(k,n)$.
\end{lemma}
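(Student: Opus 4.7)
The approach is to choose coordinates adapted to the geometry of the configuration and reduce directly to the previous lemma by computing Plücker coordinates explicitly.

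First, I would place the configuration in standard position: pick a basis $e_1,\dots,e_{n+1}$ of $V$ such that the $(2r+1)$-plane spanned by $\Seg(1,r)$ corresponds to the first $2r+2$ coordinates, with the $r$-plane $\{p\}\times\PP^r$ for $p=[x_0:x_1]$ being the row space of the $(r+1)\times(2r+2)$ matrix appearing in the proof of Lemma \ref{rnc_in_Gr}. Since $\Pi$ is a $(k-r-1)$-plane disjoint from this $(2r+1)$-plane, I can choose $\Pi$ to be spanned by $e_{2r+3},\dots,e_{k+r+2}$ (using the hypothesis $n\geq k+r+1$).

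Next, I would assemble the $(k+1)\times(n+1)$ matrix $M_p$ representing $\phi(p)$: the top $r+1$ rows are the matrix from Lemma \ref{rnc_in_Gr} (occupying the first $2r+2$ columns, zero elsewhere), and the bottom $k-r$ rows form an identity block in columns $2r+3,\dots,k+r+2$, with zeros elsewhere. The key observation is a block-structure argument on minors: for an ordered $(k+1)$-subset $J$ of columns, the corresponding Plücker minor of $M_p$ vanishes unless $J$ contains every column $2r+3,\dots,k+r+2$ (otherwise the bottom $k-r$ rows of the submatrix cannot be independent), and moreover the remaining $r+1$ indices must lie in $\{1,\dots,2r+2\}$ (otherwise the top $r+1$ rows contribute a zero column).

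Under these restrictions, the $(k+1)\times(k+1)$ minor factors, up to sign, as the determinant of the $(k-r)\times(k-r)$ identity block times an $(r+1)\times(r+1)$ minor of exactly the matrix from Lemma \ref{rnc_in_Gr}. Hence the nonzero Plücker coordinates of $\phi(p)$ are, up to a common reindexing and sign, precisely the maximal minors of that smaller matrix, which by Lemma \ref{rnc_in_Gr} are (up to sign) the $r+2$ monomials $x_0^{r+1}, x_0^r x_1, \dots, x_0 x_1^r, x_1^{r+1}$, while all other Plücker coordinates vanish identically. This exhibits $\phi(\PP^1)$ as a rational normal curve of degree $r+1$ inside $\pgrass(k,n)$, concluding the proof.

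I do not expect any serious obstacle: the argument is a book-keeping reduction. The only point requiring care is the block-vanishing analysis that forces $J$ to contain the columns of $\Pi$, but this follows immediately from the shape of $M_p$.
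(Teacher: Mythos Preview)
Your proof is correct and follows essentially the same route as the paper: after placing the configuration in standard coordinates, both you and the authors write the representing matrix in block form $\left[\begin{smallmatrix} A_{x_0,x_1} & 0 \\ 0 & I_{k-r}\end{smallmatrix}\right]$ and observe that the only nonzero Pl\"ucker coordinates are the maximal minors of $A_{x_0,x_1}$, whence the conclusion via Lemma~\ref{rnc_in_Gr}. Your block-vanishing analysis is more explicit than the paper's, which simply asserts the same conclusion, but the argument is the same.
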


\begin{proof}
We can assume $n=k+r+1$. Let $\Seg(1,r)$ be a Segre variety inside the $(2r+1)$--plane $L$ given by the vanishing of the last $k-r$ homogeneous coordinates of $\pspace^{k+r+1}$, and let $\Pi$ be the $k-r-1$ plane given by the vanishing of the first $2r+2$ coordinates. Then we can associate to the point $[x_0, x_1]$ of $\pspace^1$ the matrix whose rows span the join of $\Pi$ and $[x_0,x_1]\times \pspace^r$. This is the $(k+1)\times(k+r+2)$ matrix
\[
\left[ 
\begin{array} { c|c } 
A_{x_0,x_1} & 0_1 \\ 
\hline 
0_2   & I_{k-r}\\
\end{array} 
\right],
\]
where $A_{x_0,x_1}$ is the $(r+1)\times(2r+2)$ matrix associated to the $r$-plane $[x_0,x_1]\times\pspace^r$ in $L$, $0_1$ is the $(r+1)\times(k-r)$ zero matrix, $0_2$ is the $(k-r)\times(2r+2)$ zero matrix and $I_{k-r}$ is the $(k-r)$ identity matrix. As in the proof of Lemma \ref {rnc_in_Gr}, we see that  the only non-vanishing Pl\"ucker coordinates of the point of the Grassmaniann associated to this matrix are given by the monomials of order $r+1$ in $x_0$ and $x_1$. The assertion follows.
\end{proof}

\begin{lemma}\label{lem:ffac} Let $L_1,L_2$ be two distinct $k$--planes in $\mathbb{P}^n$ intersecting in a $(k-r)$--plane $M$, with $1\leq r\leq k+1$. Then there is a Segre variety $\Seg(1,r-1)$ in $\PP^n$ such that there are two distinct point $p_1,p_2\in \PP^1$ such that 
$L_i\cap \Seg(1,r-1)=\{p_i\}\times \PP^{r-1}$, for $i=1,2$.
\end{lemma}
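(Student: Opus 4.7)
The plan is to realize $\Seg(1, r-1)$ as a pencil of $(r-1)$-planes inside an auxiliary $(2r-1)$-plane, two of whose members are specific rulings sitting in $L_1$ and $L_2$. First I would pick, inside each $L_i$, an $(r-1)$-plane $N_i$ disjoint from $M$ with $\langle M, N_i\rangle = L_i$ (in the edge case $r=k+1$ we have $M=\emptyset$ and we simply take $N_i = L_i$). Since $N_1 \cap N_2 \subset L_1 \cap L_2 = M$ and $N_1 \cap M = \emptyset$, the planes $N_1, N_2$ are disjoint, so $\Pi := \langle N_1, N_2\rangle$ is a $(2r-1)$-plane.

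Next I would construct $\Sigma \cong \PP^1 \times \PP^{r-1}$ inside $\Pi$ as follows: fix a projective isomorphism $\phi: N_1 \to N_2$ and let $\Sigma$ be the image of the natural Segre map sending $([\alpha:\beta], [v])$ to $[\alpha v + \beta\, \phi(v)] \in \Pi$ (identifying $N_1, N_2$ with the projectivizations of the two summands of the vector space $\widehat N_1 \oplus \widehat N_2$ whose projectivization is $\Pi$). By construction the rulings over $p_1 = [1:0]$ and $p_2 = [0:1]$ are precisely $N_1$ and $N_2$.

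Finally, it remains to verify $L_i \cap \Sigma = N_i$, for which it suffices to prove $L_i \cap \Pi = N_i$. Because $L_i$ already contains $N_i$, one has $\langle L_i, \Pi\rangle = \langle L_1, L_2\rangle$, which has dimension $k+r$; the Grassmann formula then yields $\dim(L_i \cap \Pi) = k + (2r-1) - (k+r) = r-1$. Since $N_i \subset L_i \cap \Pi$ and both have dimension $r-1$, equality forces $L_i \cap \Pi = N_i$, hence $L_i \cap \Sigma = N_i \cap \Sigma = N_i = \{p_i\}\times \PP^{r-1}$.

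The only step requiring real care is the last dimension computation; once the complementary planes $N_i$ are well chosen and their disjointness from $M$ and from each other is verified, the collapse $\langle L_i, \Pi\rangle = \langle L_1, L_2\rangle$ makes everything fall into place.
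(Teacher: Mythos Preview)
Your proof is correct and follows essentially the same approach as the paper: the paper projects from $M$ to obtain two disjoint $(r-1)$--planes $L_1',L_2'$ and builds the Segre as the union of lines joining $p\in L_1'$ to $\tau(p)\in L_2'$ for a fixed isomorphism $\tau$, which is exactly your construction with the complements $N_i$ playing the role of $L_i'$. Your version is in fact more explicit---you pin down the ambient $(2r-1)$--plane $\Pi$ and carry out the Grassmann-formula check that $L_i\cap\Pi=N_i$, a verification the paper leaves to the reader.
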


\begin{proof} 
Projecting from $M$ to a $\pspace^{n-k+r-1}$, the images of $L_1,L_2$ are two disjoint $(r-1)$--planes $L'_1$ and $L'_2$. If we fix an isomorphism $\tau: L'_1 \rightarrow L'_2$, the variety defined as the union of the lines joining $p \in L'_1$ to $\tau(p) \in L'_2$ is the desired Segre variety.
\end{proof}

The following proposition describes the osculating spaces of Grassmannians.

\begin{proposition}\label{osc_pl_G}
Let $\Lambda_0$ be a point of $\pgrass(k,n)$ and let $1\leq r \leq k$. Then the $r$--osculating space $T^{(r)}_{\G(k,n),\Lambda_0}$ to $\G(k,n)$ at $\Lambda_0$  is the linear space  spanned by the Schubert variety 
\[
W_{r,\Lambda_0}=\left\{\Lambda \in \pgrass(k,n) | \dim (\Lambda \cap \Lambda_0) \geq k-r \right\}
\]
and one has
\begin{equation}\label{eq:dim}
\dim(T^{(r)}_{\G(k,n),\Lambda_0})=\sum_{i=1}^r\binom{k+1}{i}\binom{n-k}{i}.
\end{equation}
\end{proposition}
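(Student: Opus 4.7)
The strategy is to reduce everything to an explicit computation in the standard affine chart on $\PP^{N(k,n)}$ around $\Lambda_0$ afforded by the parametrization $\psi_{k,n}$ of (\ref{map_fromPtoG}). Without loss of generality I would take $\Lambda_0=[e_1\wedge\cdots\wedge e_{k+1}]$ and set $y=1$, so that $\Lambda_0$ corresponds to the origin $x_{i,j}=0$. With this normalization, each Plücker coordinate is (up to sign) a minor of the matrix (\ref{matrix_fromPtoG}): the coordinate indexed by a multi-index selecting $s$ columns from $\{k+2,\ldots,n+1\}$ and $k+1-s$ columns from $\{1,\ldots,k+1\}$ equals exactly the $s\times s$ minor $D^s_{I;J}$ of the matrix (\ref{eq:matt}), where $(I,J)$ records the surviving rows/columns.

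The crucial observation for computing $T^{(r)}_{\G(k,n),\Lambda_0}$ is that $D^s_{I;J}$ is homogeneous of degree \emph{exactly} $s$ in the $x_{i,j}$. Consequently all partial derivatives of $D^s_{I;J}$ at the origin vanish except those of order precisely $s$, so the $r$-jet of $\psi_{k,n}$ at $\Lambda_0$ sees only the coordinates with $s\leq r$. A routine calculation then shows that, for each fixed $(I,J)$ with $|I|=|J|=s\leq r$, the specific order-$s$ partial derivative matching the rows of $I$ with the columns of $J$ isolates the corresponding basis vector $z_{I,J}$ and vanishes on every other Plücker coordinate. I would conclude that $T^{(r)}_{\G(k,n),\Lambda_0}$ coincides with the coordinate subspace
\[
L_r=\bigl\{z_{I',J'}=0 \ \text{for all }(I',J')\text{ with }|I'|=|J'|>r\bigr\},
\]
whose projective dimension is $\sum_{s=1}^r\binom{k+1}{s}\binom{n-k}{s}$, which yields (\ref{eq:dim}) at once.

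The remaining task is to identify $L_r$ with the linear span of the Schubert variety $W_{r,\Lambda_0}$. I would do this in two steps. For the inclusion $\langle W_{r,\Lambda_0}\rangle\subseteq L_r$, a pigeonhole argument suffices: if $\dim(\Lambda\cap\Lambda_0)\geq k-r$, then $\Lambda$ has a representing matrix with at least $k+1-r$ rows lying in $\Lambda_0$ (i.e.\ with zero entries in the last $n-k$ columns), so every Plücker minor using more than $r$ of those last columns is forced to vanish. For the reverse inclusion, for each pair $(I,J)$ with $|I|=|J|=s\leq r$ I would exhibit the coordinate $k$-plane $\Lambda_{I,J}$ spanned by the basis vectors $\{e_j:j\in\{1,\ldots,k+1\}\setminus I\}\cup\{e_j:j\in J\}$: it meets $\Lambda_0$ in a $(k-s)$-plane (hence lies in $W_{r,\Lambda_0}$), and its only nonvanishing Plücker coordinate is precisely $z_{I,J}$. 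Together with $\Lambda_0$ itself, these $\Lambda_{I,J}$ provide all the standard basis vectors of $L_r$, giving the reverse inclusion.

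I do not anticipate any serious obstacle. The only subtlety is the bookkeeping needed to match each Plücker multi-index with the pair $(I,J)$ indexing the corresponding minor of (\ref{eq:matt}); once that dictionary is fixed, the dimension count and the spanning argument for $W_{r,\Lambda_0}$ are both elementary.
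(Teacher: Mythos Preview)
Your argument is correct and largely overlaps with the paper's, but with one genuine difference worth noting. Both proofs compute $\dim T^{(r)}_{\G(k,n),\Lambda_0}$ via the parametrization $\psi_{k,n}$ exactly as you do, and both observe (for the Schubert side) that a $k$--plane meeting $\Lambda_0$ in dimension $\geq k-r$ has all Pl\"ucker minors of level $>r$ equal to zero. Where the arguments diverge is in establishing the inclusion $\langle W_{r,\Lambda_0}\rangle \supseteq T^{(r)}$ (equivalently, your inclusion $L_r\subseteq\langle W_{r,\Lambda_0}\rangle$). The paper takes a geometric route: it first proves three preparatory lemmata (Lemmata \ref{rnc_in_Gr}, \ref{gen_rnc_in_GR}, \ref{lem:ffac}) showing that any two $k$--planes meeting in dimension $\geq k-r$ are joined inside $\G(k,n)$ by a rational normal curve of degree $\leq r$, hence $W_{r,\Lambda_0}\subseteq T^{(r)}$; equality then follows from the dimension count. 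You instead identify $T^{(r)}$ explicitly with the coordinate subspace $L_r$ and exhibit the coordinate $k$--planes $\Lambda_{I,J}\in W_{r,\Lambda_0}$ as an explicit spanning set. Your approach is more elementary and self--contained, bypassing the three lemmata entirely; the paper's approach has the advantage of giving a coordinate--free geometric reason for the inclusion $W_{r,\Lambda_0}\subseteq T^{(r)}$, which is conceptually pleasant even if not strictly needed here.
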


\begin{proof} First of all we claim that $W_{r,\Lambda_0}\subseteq T^{(r)}_{\G(k,n),\Lambda_0}$. 
To prove this note that by Lemmata \ref{gen_rnc_in_GR} and \ref {lem:ffac}, for any $k$--plane $\Lambda$  intersecting $\Lambda_0$ in a linear space of dimension at least $k-r$, we can construct a rational normal curve of degree $r$  in $\G(k,n)$ passing through $\Lambda_0$ and $\Lambda$. Such a curve must be contained in $T^{(r)}_{\G(k,n),\Lambda_0}$ and this proves the claim. 

To prove that $T^{(r)}_{\G(k,n),\Lambda_0}=\langle  W_{r,\Lambda_0} \rangle$, we will compute the dimensions of both $T^{(r)}_{\G(k,n),\Lambda_0}$ and $\langle  W_{r,\Lambda_0} \rangle$ and we will prove they are equal. 

First let us prove \eqref {eq:dim}. 
Without loss of generality, we may assume that $\Lambda_0$ is spanned by the points corresponding to the vectors $e_1,\ldots, e_{k+1}$ of the basis $B$ of $V$, so that $\Lambda_0$ is the point where only the first Pl\"ucker coordinate is different from zero. Consider the local parametrization of $\G(k,n)$ around $\Lambda_0$ given by the restriction of the map $\psi_{k,n}$  as in \eqref{map_fromPtoG} to $\mathbb{A}^{M(k,n)}=\PP^{M(k,n)}\setminus H$, so that $\psi_{k,n}$ maps the origin of $\mathbb{A}^{M(k,n)}$ to $\Lambda_0$. The $r$--osculating space $T^{(r)}_{\G(k,n),\Lambda_0}$ is spanned by the points that are derivatives up to order $r$ of the  parametrization at the origin. 

Each coordinate function of $\psi_{k,n}$ is given by a minor $D^s$ as above (in the affine coordinates $x_{i,j}$, for $i=1, \ldots, k+1$ and $j=k+2, \ldots, n+1$, of $\mathbb{A}^{M(k,n)}$). The derivatives up to order $r$ of the minors $D^s$ with $s \geq r+1$ vanish at $0 \in \mathbb{A}^{M(k,n)}$. Hence  $T^{(r)}_{\G(k,n),\Lambda_0}$ has dimension at most $\sum_{i=1}^rm_i$, where we recall that $m_i=\binom{k+1}{i}\binom{n-k}{i}$ is the number of  the $D^i$'s.

Moreover, for each minor $D^s$ with $s \leq r$, there exists a derivative of order $s$ of the  parametrization at the origin such that all of its coordinates, except the one corresponding to $D^s$, vanish. This implies \eqref {eq:dim}.

Next we compute the dimension of $\langle  W_{r,\Lambda_0} \rangle$ and prove that it equals the right hand side of  \eqref {eq:dim}. Let $\Lambda$ be an element of $W_{r,\Lambda_0}$. It is spanned by $k+1$ points, and we may assume the first $k-r+1$ of them lie on $\Lambda_0$.  Then the Pl\"ucker coordinates of $\Lambda$ are given by the maximal minors of a matrix $M_{\Lambda}=[v_{i,j}]_{i=1, \ldots, k+1; j=1, \ldots, n+1}$ where $v_{i,j}=0$ if $i \in \{1, \ldots, k-r+1\}$ and $j\in\{k+2 ,\ldots, n+1\}$. Moreover, varying $\Lambda$ in $W_{r,\Lambda_0}$ we may consider the non--zero $v_{i,j}$ as variables. 

The vanishing maximal minors of a matrix of type $M_{\Lambda}$ are those  involving at most $r+1$ of the last $n-k$ columns. Hence  their number is
$$
c=\sum_{i=r+1}^{k+1} {{n-k}\choose i}{{k+1}\choose i}=\sum_{i=r+1}^{k+1} m_i
$$
and therefore
$$
\dim (\langle  W_{r,\Lambda_0} \rangle)=N(k,n)-c.
$$
On the other hand we have
$$
N(k,n)=\sum_{i=1}^{k+1}m_i
$$
hence
$$
\dim (\langle  W_{r,\Lambda_0} \rangle)=\sum_{i=1}^{r}m_i=\dim (T^{(r)}_{\G(k,n),\Lambda_0}),
$$
as desired.  \end{proof}

\subsection{} Next we  give the announced  geometric description of the isomorphism of  $U \cap \pgrass(k,n)$ with $\mathbb{A}^{M(k,n)}$.

\begin{proposition}\label{map_fromGtoP}
Let $\Pi$ be an element of $\pgrass(n-k-1,n)$ and $W_\Pi$ the Schubert variety 
\begin{equation*}\label{vertex_proj_fromGtoP}
\left\{\Lambda \in \pgrass(k,n) | \dim (\Lambda \cap \Pi) \geq 1\right\}.
\end{equation*}
Then the projection $\varphi: \pgrass(k,n) \dashrightarrow \pspace^{M(k,n)}$ from the linear space spanned by $W_\Pi$ is the inverse map of a $\psi_{k,n}:\pspace^{M(k,n)} \dashrightarrow \pgrass(k,n)$ as in \eqref{map_fromPtoG}.
\end{proposition}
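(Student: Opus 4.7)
The plan is to identify the center of projection $\langle W_\Pi\rangle$ with an explicit coordinate subspace of $\PP^{N(k,n)}$ and then match the resulting projection formula against the definition of $\psi_{k,n}$. After a linear change of basis on $V$ I may assume $\Pi$ is the $(n-k-1)$-plane spanned by $e_{k+2},\ldots,e_{n+1}$ and that $\psi_{k,n}$ is the map \eqref{map_fromPtoG} attached to the resulting frame (this reduction is legitimate because by Remark \ref{ext_algebra_open} the map depends only on the totally decomposable element $e_{k+2}\wedge\cdots\wedge e_{n+1}$, i.e.\ on $\Pi$). Let $L_\Pi\subset\PP^{N(k,n)}$ be the linear subspace defined by $z_I=0$ for every multi-index $I$ with $|I\cap\{k+2,\ldots,n+1\}|\leq 1$; it has codimension $M(k,n)+1$, so projection from $L_\Pi$ lands in $\PP^{M(k,n)}$. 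The core claim is $\langle W_\Pi\rangle=L_\Pi$.

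For $W_\Pi\subseteq L_\Pi$: any $\Lambda\in W_\Pi$ admits a representative matrix $M_\Lambda$ whose first two rows lie in $\langle e_{k+2},\ldots,e_{n+1}\rangle$, since $\dim(\Lambda\cap\Pi)\geq 1$ projectively forces the vector-space intersection to have dimension at least $2$. A Laplace expansion of the maximal minor $z_I$ along those two rows writes $z_I$ as a sum of products each containing a $2\times 2$ factor drawn from columns of $M_\Lambda$ supported in $\{k+2,\ldots,n+1\}$; when $|I\cap\{k+2,\ldots,n+1\}|\leq 1$ no such $2\times 2$ block is available, so $z_I=0$. For the reverse inclusion $L_\Pi\subseteq\langle W_\Pi\rangle$: for each multi-index $I$ with $|I\cap\{k+2,\ldots,n+1\}|\geq 2$ the coordinate $k$-plane $\Lambda_I=\langle e_{i_1},\ldots,e_{i_{k+1}}\rangle$ belongs to $W_\Pi$ (because $\Lambda_I\cap\Pi$ contains the span of those $e_{i_j}$ with $i_j\geq k+2$, of projective dimension $\geq 1$) and has $z_J=\delta_{IJ}$ as its Pl\"ucker image; thus every coordinate vector spanning $L_\Pi$ is realized inside $W_\Pi$.

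Granted $\langle W_\Pi\rangle=L_\Pi$, the projection $\varphi$ is, in Pl\"ucker coordinates, the map $[z_I]_I\mapsto [z_I]_{|I\cap\{k+2,\ldots,n+1\}|\leq 1}$. On the image of $\psi_{k,n}$ the Pl\"ucker coordinate $z_I$ with $|I\cap\{k+2,\ldots,n+1\}|=r$ equals $\pm y^{k+1-r}D^r_{\cdot;\cdot}$ by inspection of \eqref{matrix_fromPtoG} and \eqref{hyp_sl_expl}, so the retained coordinates are exactly $y^{k+1}$ and $\pm y^k x_{i,j}$, while the ones with $r\geq 2$ vanish upon projection. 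Hence $\varphi\circ\psi_{k,n}([y,x_{i,j}])=[y^{k+1},\pm y^k x_{i,j}]=[y,x_{i,j}]$ generically, and $\varphi$ is the birational inverse of $\psi_{k,n}$. The only step with any content is the double inclusion in the previous paragraph; both directions are short and explicit Pl\"ucker calculations, and I do not anticipate a substantive obstacle.
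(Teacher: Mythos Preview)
Your proof is correct and takes a genuinely different, more elementary route than the paper's. The paper argues as follows: the inverse of $\psi_{k,n}$ is a linear projection whose center is cut out by the hyperplanes corresponding to the sub-linear-system $kH+|\pi|\subset\mathfrak{d}_{k,n}$; these are exactly the hyperplanes of $\PP^{N(k,n)}$ containing $\psi_{k,n}(H)=\G_0=\G(k,\Pi)$ with multiplicity at least $k$, i.e.\ containing the $(k-1)$--osculating space $T^{(k-1)}_{\G(k,n),P}$ for every $P\in\G_0$. Invoking Proposition~\ref{osc_pl_G} (the identification of osculating spaces to Grassmannians with spans of Schubert varieties), the common intersection of those hyperplanes is then recognised as $\langle W_\Pi\rangle$. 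You bypass the osculating--space machinery entirely: your double inclusion $\langle W_\Pi\rangle=L_\Pi$ is established by a direct Laplace expansion and by exhibiting the coordinate $k$--planes $\Lambda_I$, after which $\varphi\circ\psi_{k,n}=\mathrm{id}$ is a one--line check. Your argument is shorter and self--contained (it does not need Lemmata~\ref{rnc_in_Gr}--\ref{lem:ffac} or Proposition~\ref{osc_pl_G}); the paper's argument, on the other hand, supplies a conceptual geometric interpretation of the center of projection as the locus where all tangent--osculating hyperplanes along the exceptional image meet, and ties the construction back to the description of $\mathfrak{d}_{k,n}$ in Proposition~\ref{lin_sist_hyp}. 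One cosmetic point: in your last line, $[y^{k+1},\pm y^k x_{i,j}]=[y,x_{i,j}]$ hides fixed signs $(-1)^{i+k+1}$; the composition $\varphi\circ\psi_{k,n}$ is a diagonal projective automorphism of $\PP^{M(k,n)}$, which still suffices to conclude that $\varphi$ is the inverse of \emph{a} $\psi_{k,n}$ as stated.
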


\begin{proof}
We use the notation of Lemma $\ref{lin_sist_hyp}$. First of all we observe that the linear system $\mathfrak{d}_{k,n}$ contains the linear system of hyperplanes of $\pspace^{M(k,n)}$ as a subsystem: this is $kH+|\pi|$, where $\pi$ is any hyperplane. Via the map $	\psi_{k,n}$ the hypersurfaces of $\mathfrak{d}_{k,n}$ are sent to hyperplane sections of $\pgrass(k,n)$. Thus  the inverse of $\psi_{k,n}$ is a projection whose centre is the intersection of all hyperplanes of $\pspace^{N(k,n)}$ whose intersection with $\G(k,n)$ contains $\psi_{k,n}(H)$ with multiplicity at least $k$.

The image of $H$ under $\psi_{k,n}$ is the Grassmannian $\pgrass_0=\pgrass(k, n-k-1)$ of all subspaces of dimension $k$ contained in a fixed subspace $\Pi$ of $\PP^n$ dimension  $n-k-1$. Indeed, if we set $y=0$ in \eqref{matrix_fromPtoG}, we obtain the Pl\"ucker embedding associated to a $(k+1)\times(n-k)$ matrix.

A hyperplane $H'$ in $\pspace^{N(k,n)}$ contains $\pgrass_0$ with multiplicity at least $k$ if and only if $H'$ contains $T^{(k-1)}_{\G(k,n),P}$ for any $P \in \pgrass_0$ and the centre of the projection is the intersection of these hyperplanes.  
Then from Proposition \ref{osc_pl_G}  the centre of  projection is the linear span of $W_{\Pi}$. This proves the assertion. \end{proof}

\begin{remark} With a dimension count similar to the one at the end of Proposition \ref {osc_pl_G}, one checks that the linear space spanned by $W_\Pi$ has dimension $N(k,n)-m_1-1=N(k,n)-(k+1)(n-k)-1=N(k,n)-M(k,n)-1$. This fits with the result of Proposition \ref {map_fromGtoP}.
\end{remark}

\begin{remark} From the above considerations it follows that the birational map $\psi_{n,k}$ induces an isomorphism between $\pspace^{M(k,n)}$ minus a hyperplane $H$ and $\pgrass(k,n)$ minus a hyperplane section $\mathfrak H'$, precisely the hyperplane section corresponding to the hypersurface $(k+1)H$ in $\mathfrak d_{k,n}$. Looking at the proof of Proposition \ref {map_fromGtoP}, we see that $\mathfrak H'$ contains $\pgrass_0$ with multiplicity  $k+1$, hence  it contains $T^{(k)}_{\G(k,n),P}$ for any $P \in \pgrass_0$. From Proposition \ref{osc_pl_G} one deduces that $\mathfrak H'$ coincides with the set of all $\Lambda\in \G(k,n)$ that have non--empty intersection with the $(n-k-1)$--plane $\Pi$.
We will call the hyperplane $H'$ cutting out such a $\mathfrak H'$ on $\G(k,n)$ a \emph{$k$--osculating hyperplane} to $\G(k,n)$. 
\end{remark}

\begin{lemma}\label{osc_hyperplanes_grass}
Let $\check{\pspace}^{N(k,n)}$ be the dual space of $\pspace^{N(k,n)}$. Then the $k$--osculating hyperplanes to $\pgrass(k,n)$ are parametrized by a $\pgrass(n-k-1,n)$ in $\check{\pspace}^{N(k,n)}$. In particular, since $\pgrass(n-k-1,n)$ is non--degenerate in $\check{\pspace}^{N(k,n)}$, there is no point of ${\pspace}^{N(k,n)}$  contained in all   $k$--osculating hyperplanes.
\end{lemma}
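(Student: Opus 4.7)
\emph{Parametrization.} The preceding Remark attaches to every $k$--osculating hyperplane $H'$ a unique $(n-k-1)$--plane $\Pi\subset\PP^n$ characterized by
\[
H'\cap\pgrass(k,n)=\{\Lambda\in\pgrass(k,n):\Lambda\cap\Pi\neq\emptyset\},
\]
while Proposition \ref{map_fromGtoP} produces such an $H'$ from every $(n-k-1)$--plane $\Pi$. This yields a well-defined injection $\Phi:\pgrass(n-k-1,n)\to\check\PP^{N(k,n)}$ whose image is exactly the set of $k$--osculating hyperplanes, so my plan is to identify $\Phi$ with the Pl\"ucker embedding and then deduce the final assertion by projective duality.

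\emph{Identification with the Pl\"ucker embedding.} Fixing an isomorphism $\wedge^{n+1}V\cong\KK$, the perfect pairing
\[
\wedge^{k+1}V\times \wedge^{n-k}V\longrightarrow\wedge^{n+1}V\cong\KK,\qquad (\alpha,\beta)\longmapsto\alpha\wedge\beta,
\]
identifies $\check\PP^{N(k,n)}=\PP((\wedge^{k+1}V)^*)$ canonically with $\PP(\wedge^{n-k}V)$. Under this identification the class of a decomposable vector $w_1\wedge\cdots\wedge w_{n-k}$ corresponds to the hyperplane $\{\alpha\in\wedge^{k+1}V:\alpha\wedge w_1\wedge\cdots\wedge w_{n-k}=0\}$ of $\PP(\wedge^{k+1}V)$. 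Evaluated on a decomposable Pl\"ucker vector $\alpha=u_1\wedge\cdots\wedge u_{k+1}$, this is precisely the condition for the $k$--plane $\PP(\langle u_1,\ldots,u_{k+1}\rangle)$ to meet the $(n-k-1)$--plane $\PP(\langle w_1,\ldots,w_{n-k}\rangle)$, since the two dimensions $k+1$ and $n-k$ add up to $\dim V$. Hence $\Phi$ coincides, up to the canonical isomorphism above, with the usual Pl\"ucker embedding $\pgrass(n-k-1,n)\hookrightarrow\PP(\wedge^{n-k}V)$, which proves the first assertion.

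\emph{Conclusion.} A point $p\in\PP^{N(k,n)}$ lies on every $k$--osculating hyperplane if and only if the hyperplane $\check H_p\subset\check\PP^{N(k,n)}$ dual to $p$ contains the image of $\Phi$, namely the Pl\"ucker-embedded $\pgrass(n-k-1,n)$; the non-degeneracy of this Pl\"ucker embedding---equivalent to the linear independence of the Pl\"ucker coordinates---forbids that. The only delicate point is the bookkeeping in the second step, i.e.\ checking that the wedge-pairing identification $\check\PP^{N(k,n)}\cong\PP(\wedge^{n-k}V)$ sends the $k$--osculating hyperplanes precisely to the decomposable locus; once the pairing is written down, this is a direct algebraic computation.
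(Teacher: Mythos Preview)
Your proof is correct and follows essentially the same approach as the paper's: both identify $\check\PP^{N(k,n)}$ with $\PP(\wedge^{n-k}V)$ via the wedge pairing and observe that the condition ``$\Lambda$ meets $\Pi$'' is exactly $u_1\wedge\cdots\wedge u_{k+1}\wedge w_1\wedge\cdots\wedge w_{n-k}=0$, so that $\Pi\mapsto H'$ is the Pl\"ucker embedding of $\pgrass(n-k-1,n)$. The paper carries this out by expanding the $(n+1)\times(n+1)$ determinant in Pl\"ucker coordinates (obtaining explicit coefficients $S_{i_1,\ldots,i_{n-k}}p_{i_1,\ldots,i_{n-k}}$), whereas you phrase the same computation in coordinate-free language; the content is identical.
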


\begin{proof} We have $\check{\pspace}^{N(k,n)}=\pspace(\wedge^{k+1}\check V)=\pspace(\wedge^{n-k}V)$. 

Let $\Pi$ be a $(n-k-1)$--plane spanned by $n-k$ points corresponding to the vectors $v_1, \ldots, v_{n-k}$ of $V$. A $k$--plane $\Lambda$, spanned by $k+1$ points  corresponding  to the vectors $w_1, \ldots, w_{k+1}$ of $V$, intersects $\Pi$ if and only if the square matrix of order $n+1$ whose rows are $v_1, \ldots, v_{n-k}, w_1, \ldots, w_{k+1}$ has zero determinant. The set of these $k$--planes is the section of $\pgrass(k,n)$ with the $k$--osculating hyperplane of $\pspace^{N(k,n)}$ of equation
\[
\sum_{1\leq i_1 < \cdots < i_{n-k} \leq n+1}S_{i_1,\ldots, i_{n-k}}p_{i_1,\ldots, i_{n-k}}x_{\overline{i_1, \ldots, i_{n-k}}}=0,
\]
where the $p_{i_1,\ldots,i_{n-k}}$'s are the Pl\"ucker coordinates of $\Pi$ in $\pgrass(n-k-1,n)$, the $x_{\overline{i_1, \ldots, i_{n-k}}}$'s are the homogeneous coordinates of $\pspace^{N(k,n)}$, where we denote by ${\overline{i_1, \ldots, i_{n-k}}}$ the $(k+1)$--tuple of indices obtained by deleting $\{i_1, \ldots, i_{n-k}\}$ from $(1, \ldots, n+1)$, and $S_{i_1,\ldots, i_{n-k}}$ is the sign of the permutation $(i_1,\ldots, i_{n-k}, \overline{i_1, \ldots, i_{n-k}})$. 

So the coordinates of this hyperplane in $\check{\pspace}^{N(k,n)}$ are $[S_{i_1,\ldots, i_{n-k}}p_{i_1,\ldots, i_{n-k}}]_{(i_1,\ldots, i_{n-k})}$. The assertion follows. 
\end{proof}

\begin{corollary}\label{general_proj}
Let $X$ be an irreducible subvariety of $\pgrass(k,n)$. Then for a general projection 
$\varphi: \pgrass(k,n) \dashrightarrow \pspace^{M(k,n)}$ as in Proposition \ref {map_fromGtoP}, $X$ is not contained in the indeterminacy locus of $\varphi$ and the restriction of $\varphi$ to $X$ is a birational map of $X$ to its image. 
\end{corollary}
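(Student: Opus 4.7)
The plan is to reduce both conclusions to the single statement that, for a general $\Pi\in\pgrass(n-k-1,n)$, the subvariety $X$ is not contained in the associated $k$--osculating hyperplane section $\mathfrak H'_\Pi$ of $\pgrass(k,n)$. Indeed, the remark preceding Lemma \ref{osc_hyperplanes_grass} exhibits $\varphi=\psi_{k,n}^{-1}$ as an isomorphism between $\pgrass(k,n)\setminus\mathfrak H'_\Pi$ and $\pspace^{M(k,n)}\setminus H$. Since the hyperplane $H'$ cutting out $\mathfrak H'_\Pi$ contains $T^{(k)}_{\G(k,n),P}$ for every $P\in\pgrass_0$, it in particular contains $T^{(k-1)}_{\G(k,n),P}$ for every such $P$, so by the description of the projection centre in Proposition \ref{map_fromGtoP} we have $\langle W_\Pi\rangle\subset H'$. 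Thus the indeterminacy locus $\pgrass(k,n)\cap\langle W_\Pi\rangle$ of $\varphi$ is contained in $\mathfrak H'_\Pi$. Consequently, as soon as we know $X\not\subset\mathfrak H'_\Pi$, the first assertion is immediate, and moreover $X\setminus\mathfrak H'_\Pi$ is a non-empty open subset of $X$ on which $\varphi$ is an isomorphism onto its image, making $\varphi|_X$ generically injective and hence birational onto its image.

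To verify that $X\not\subset\mathfrak H'_\Pi$ for a general $\Pi$, I would fix any point $p\in X$ and consider the incidence set
\[
Z_p=\{\Pi\in\pgrass(n-k-1,n):p\in\mathfrak H'_\Pi\}.
\]
By Lemma \ref{osc_hyperplanes_grass}, the $k$--osculating hyperplanes to $\pgrass(k,n)$ are parametrized by a \emph{non-degenerate} copy of $\pgrass(n-k-1,n)$ inside $\check\pspace^{N(k,n)}$; non-degeneracy says precisely that no point of $\pspace^{N(k,n)}$ lies in every $k$--osculating hyperplane. In particular $p$ does not lie in every $\mathfrak H'_\Pi$, so $Z_p$ is a proper closed subvariety of $\pgrass(n-k-1,n)$. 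For any $\Pi\notin Z_p$ we have $p\in X\setminus\mathfrak H'_\Pi$, so $X\not\subset\mathfrak H'_\Pi$, as desired.

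The only real obstacle to this strategy is the non-degeneracy statement in Lemma \ref{osc_hyperplanes_grass}, which is exactly what prevents the family of hyperplane sections $\mathfrak H'_\Pi$ from having a common point and thus what makes the ``general $\Pi$'' argument go through. With that input in hand, the corollary falls out immediately from the geometric description of $\psi_{k,n}$ and $\varphi$ given in Proposition \ref{map_fromGtoP} together with the remarks that follow it.
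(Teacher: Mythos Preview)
Your proposal is correct and follows essentially the same route as the paper: both reduce the claim to showing that $X$ is not contained in the $k$--osculating hyperplane section $\mathfrak H'_\Pi$ for general $\Pi$, invoke the non-degeneracy statement of Lemma \ref{osc_hyperplanes_grass} to guarantee this, and then use the fact that $\varphi$ is an isomorphism off $\mathfrak H'_\Pi$ (so both the indeterminacy locus and the contracted locus lie in $\mathfrak H'_\Pi$) to conclude. Your write-up is simply more explicit about why $\langle W_\Pi\rangle\subset H'$ and about the incidence argument at a chosen point $p\in X$, but the underlying idea is identical to the paper's.
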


\begin{proof}
Given a projection $\varphi: \pgrass(k,n) \dashrightarrow \pspace^{M(k,n)}$ as in Proposition \ref {map_fromGtoP}, its   indeterminacy locus and the subvariety contracted by the projection are contained in a $k$--osculating hyperplane section of the Grassmannian. By Lemma \ref {osc_hyperplanes_grass}, these hyperplanes vary in a Grassmannian $\pgrass(n-k-1,n)$ in $\check{\pspace}^{N(k,n)}$, and there is no point of ${\pspace}^{N(k,n)}$  contained in all these  hyperplanes. Hence, given the subvariety $X$ in $\G(k,n)$, there is certainly a $k$--osculating hyperplane non containing it. The corresponding projection enjoys the required property. \end{proof}

\section{Fano schemes}\label{sec:fano}

Let $X\subset \PP^n$ be an irreducible projective variety. Given any positive integer $k$, we will denote by $F_k(X)$ the Hilbert scheme of $k$--planes of $\PP^n$ contained in $X$. This is also called the 
\emph{$k$--Fano scheme} of $X$. We will not be interested in the scheme structure on $F_k(X)$, but rather on its support. In particular we will be interested in $F_k(X)$ when $X$ is an irreducible  hypersurface of degree $d\geq 2$ in $\PP^n$.  

 This short section is devoted to prove the following:

\begin{proposition}\label{fano-bi-rational}
Let $X$ be an irreducible hypersurface of degree $d\geq 2$ in $\pspace^n$. Let $\varphi: \pgrass(k,n) \dashrightarrow \pspace^{M(k,n)}$ be a general projection map as in Proposition $\ref{map_fromGtoP}$. Then $\varphi|_{F_k(X)}$ is a birational map on each component of $F_k(X)$ and $\varphi(F_k(X))$ is defined by the vanishing of $\binom{d+k}{k}$ polynomials of degree $d$.
\end{proposition}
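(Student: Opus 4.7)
The plan is to treat the two assertions in turn, using the explicit parametrization of $k$--planes given by $\psi_{k,n}$.

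\emph{Birationality of $\varphi|_{F_k(X)}$.} The scheme $F_k(X)$ has only finitely many irreducible components. For each one, Corollary \ref{general_proj} guarantees that a general projection $\varphi:\pgrass(k,n)\dashrightarrow \pspace^{M(k,n)}$ of the form in Proposition \ref{map_fromGtoP} does not contract the component and restricts to a birational map on it. Since only finitely many components are involved, a sufficiently general $\varphi$ works for all of them simultaneously.

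\emph{Defining equations of $\varphi(F_k(X))$.} I use the inverse map $\psi_{k,n}$. For $[y:x_{i,j}]\in \pspace^{M(k,n)}$ in the domain of $\psi_{k,n}$, the associated $k$--plane in $\pspace^n$ is the row--span of the matrix \eqref{matrix_fromPtoG}; call its rows $R_1,\ldots,R_{k+1}$. A generic point of this plane has the form $\sum_{i=1}^{k+1} t_i R_i$, with $[t_1:\cdots:t_{k+1}]$ internal coordinates on the plane, and each of its $n+1$ coordinates is bihomogeneous of bidegree $(1,1)$ in the variables $t$ and in the variables $(y,x_{i,j})$. If $F(z_1,\ldots,z_{n+1})$ is the degree $d$ equation of $X$, substituting gives
\begin{equation*}
F\!\left(\sum_{i=1}^{k+1} t_i R_i\right)=\sum_{|J|=d} G_J(y,x)\, t^{J},
\end{equation*}
where $J=(j_1,\ldots,j_{k+1})$ ranges over multi-indices of weight $d$ and each coefficient $G_J$ is homogeneous of degree $d$ in $(y,x_{i,j})$. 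The $k$--plane is contained in $X$ if and only if this expression vanishes identically in $t$, i.e.\ iff $G_J(y,x)=0$ for every $J$. The count $\binom{d+k}{k}$ is exactly the number of such multi-indices.

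\emph{Identification.} Let $Z=V(G_J\,:\,|J|=d)\subset \pspace^{M(k,n)}$. On the open set $U'=\{y\neq 0\}$ the map $\psi_{k,n}$ is an isomorphism onto its image in $\pgrass(k,n)$, and by construction $Z\cap U'$ coincides with the preimage under $\psi_{k,n}$ of $F_k(X)\cap\psi_{k,n}(U')$, hence with $\varphi(F_k(X))\cap U'$. Combining this with the birationality established in the first step, $Z$ and $\varphi(F_k(X))$ agree on an open dense subset of each component and therefore coincide as closed subvarieties.

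The step I expect to be the most delicate is the final identification: one must check that $Z$ does not acquire spurious components supported inside $H=\{y=0\}$ coming from rank-deficient specializations of the matrix \eqref{matrix_fromPtoG}. This is handled by working on $U'$, where $\psi_{k,n}$ is a genuine isomorphism, and invoking birationality to pass to closures.
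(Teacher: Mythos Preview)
Your proof is correct and follows essentially the same approach as the paper: invoke Corollary~\ref{general_proj} for the birationality on each component, then substitute the row parametrization of the generic $k$--plane into the defining equation of $X$ and read off the $\binom{d+k}{k}$ coefficients, each of degree $d$. The paper carries this out directly in the affine chart $\{y\neq 0\}$ (with affine coordinates $a_{i,j}$) and does not raise the closure issue you flag at the end; since the application in Lemma~\ref{rat_sec_F} only uses the description on this chart, your concern about spurious components in $H$ is harmless either way.
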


\begin{proof}
The first assertion follows directly from Corollary $\ref{general_proj}$.

Let us fix homogeneous coordinates $[x_0,\ldots, x_n]$ in $\PP^n$ and let $f=0$ be the equation of $X$  in this system, with
\[
f(x_0, \ldots,x_n)=\sum_{d_0+\ldots+d_n=d}\alpha_{d_0\ldots d_n}x_0^{d_0}\ldots x_n^{d_n}.
\]

We assume, without loss of generality, that the projection is an isomorphism on the open set $U$ of the Grassmannian where the first Pl\"ucker coordinate is different from zero. For every $\Lambda \in U$ we can give a parametrization $\phi_\Lambda: \pspace^k \rightarrow \Lambda \subseteq \pspace^n$ of $\Lambda$ as

\begin{gather*}
[s_0, \ldots, s_k] \mapsto [s_0, \ldots, s_k] \begin{bmatrix}
1      & 0 & 0 & \ldots & 0 & a_{1,k+2} & a_{1,k+3} & \ldots & a_{1,n+1} \\
0      & 1 & 0 & \ldots & 0 & a_{2,k+2} & a_{2,k+3} & \ldots & a_{2,n+1} \\
\vdots &   &   &        &   &           &           &        &         \\
0      & 0 & 0 & \ldots & 1 & a_{k+1,k+2} & a_{k+1,k+3} & \ldots & a_{k+1,n+1} \\
\end{bmatrix}
\end{gather*}
with $a_{i,j}$, for $1\leq i\leq k+1$, $k+2\leq j\leq n+1$, depending on $\Lambda$. 

Then $f(\phi_\Lambda([s_0, \ldots, s_k]))$ is a form of degree $d$ in $s_0, \ldots, s_k$ with coefficient polynomials in the  $a_{i,j}$'s and in the $\alpha_{d_0 \ldots d_n}$'s. Imposing that $\Lambda$ sits in $F_k(X)$ is equivalent to impose that $f(\phi_\Lambda([s_0, \ldots, s_k]))$ is identically zero as a form in $s_0, \ldots, s_k$. This translates in imposing that the $\binom{d+k}{k}$ coefficients of $f(\phi_\Lambda([s_0, \ldots, s_k]))$ all vanish, and these are linear in the $\alpha_{d_0,\ldots,d_n}$'s and of degree $d$ in the $a_{i,j}$'s. The assertion follows. \end{proof}

\section{Families of hypersurfaces and the section lemma}\label{sec:fam}

In this section we introduce the definition of a \emph{family of hypersurfaces} and we prove a crucial result, the \emph{Section Lemma} \ref {rat_sec_F}, in whose proof we use an idea of Conforto \cite{conf:unisec}, which extends previous work by Comessatti \cite{comes:unisec}.

\subsection{} We start with some definitions. We will denote by $\L_{n,d}$ the linear system of all hypersurfaces of degree $d$ in $\PP^n$, and by $p: \mathcal H_{n,d}\longrightarrow \L_{n,d}$ the universal family, so that $ \mathcal H_{n,d}\subset \L_{n,d}\times \PP^n$ and $p$ is the projection to the first factor.

\begin{definition}\label{family_hypers}
Let $W$ be an irreducible variety. We call a \emph{family of hypersurfaces (of degree $d$ and dimension $n-1$) parametrized by $W$} any morphism $f: \mathscr{X} \rightarrow W$, such that there exists a morphism $g:W \rightarrow \L_{n,d}$ so that the following diagram
\[ \xymatrix{
\mathscr{X}  \ar[r] \ar[d]_{f} &  \mathcal H_{n,d} \ar[d]^{p} \\
 W \ar[r]^{g} & \L_{n,d}
}
\]
is cartesian. In particular, $f: \mathscr{X} \rightarrow W$ is flat. For any point $w\in W$ we will denote by $X_w\subset \PP^n$ the corresponding hypersurface, i.e., the fibre of $f: \mathscr{X} \rightarrow W$ over $w$. 
\end{definition}

\begin{definition}\label{birational_families}
Given two families of hypersurfaces $\mathscr{X}\rightarrow W$ and $\mathscr{Y}\rightarrow T$ as in Definition \ref {family_hypers}, we say that $\mathscr{X}$ is \emph{birationally equivalent} to $\mathscr{Y}$ if there exist two birational maps $f:\mathscr{X} \dashrightarrow \mathscr{Y}$ and $g:W\dashrightarrow T$ such that the diagram
\[
\begin{tikzcd}
\mathscr{X}\arrow[r,dashed, "f"] \arrow[d] & \mathscr{Y}\arrow[d]
\\
W\arrow[r,dashed, "g"] & T
\end{tikzcd}
\]
commutes.
\end{definition}

We will be interested in family of hypersurfaces up to birational equivalence. The following lemma gives us a sort of canonical way of representing a family of hypersurfaces up to birational equivalence.

\begin{lemma}\label{description_fam}
Let $\mathscr{X} \rightarrow W$ be a family of hypersurfaces of degree $d$ in $\PP^n$ with $\dim(W)=r$. Then there is a birationally equivalent family $\mathscr{X}' \rightarrow W'$ such that $W'$ is a dense open subset of a hypersurface in $\pspace^{r+1}$ 
which is birational to $W$ and $\mathscr{X}'\subset W'\times \PP^n$ has equation of the form
\begin{equation}\label{sist_eq_rat_fam}
\sum_{i_1, \ldots, i_d \in \{0, \ldots, n\}}a_{i_1\ldots i_d}(u_0, \ldots, u_{r+1})\prod_{j=1}^dx_{i_j}=0
\end{equation}
where $a_{i_1\ldots i_d}\in H^0(W', \mathcal O_{W'}(\mu))$ for some $\mu \in \mathbb{N}$, for all $i_1, \ldots, i_d \in \{0, \ldots, n\}$.
\end{lemma}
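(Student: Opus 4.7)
The plan is to turn the parameter space $W$ into a hypersurface in $\PP^{r+1}$ via its function field, then show that the coefficients of the family's equation, viewed as rational functions on the new parameter space, can be simultaneously cleared of denominators.

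First I would replace $W$ by a birationally equivalent hypersurface. Since $W$ is irreducible of dimension $r$ over an algebraically closed field of characteristic zero, the primitive element theorem applied to the finite extension $K(W)/\KK(t_1,\dots,t_r)$ (where $t_1,\dots,t_r$ is a transcendence basis) yields $K(W)=\KK(t_1,\dots,t_r)[\theta]$ with $\theta$ algebraic, satisfying some irreducible relation $P(t_1,\dots,t_r,\theta)=0$. Homogenizing $P$ in one more variable produces an irreducible hypersurface $\overline{W}'\subset \PP^{r+1}$ with homogeneous coordinates $[u_0:\dots:u_{r+1}]$ such that $t_i=u_i/u_0$ and $\theta=u_{r+1}/u_0$, and $\overline{W}'$ is birational to $W$. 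The birational map $W\dashrightarrow\overline{W}'$ pulls back to an isomorphism of function fields $K(\overline{W}')\cong K(W)$.

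Next I would pull back the classifying morphism $g:W\to\L_{n,d}$ along this birational equivalence to obtain a rational map $g':\overline{W}'\dashrightarrow\L_{n,d}$. Writing the equation of the fibre over the generic point in coordinates $[x_0:\dots:x_n]$, we get $\sum_{d_0+\dots+d_n=d}\alpha_{d_0\dots d_n}\,x_0^{d_0}\cdots x_n^{d_n}=0$, where each $\alpha_{d_0\dots d_n}$ lies in $K(\overline{W}')$. Since $\overline{W}'$ is a hypersurface in $\PP^{r+1}$, every rational function on it is represented by a quotient of homogeneous forms in $u_0,\dots,u_{r+1}$ of the same degree, taken modulo the defining equation. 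Picking such representations $\alpha_{d_0\dots d_n}=F_{d_0\dots d_n}/G_{d_0\dots d_n}$ and letting $G$ be a common homogeneous multiple (for example the product of the $G_{d_0\dots d_n}$'s), we can rewrite each $\alpha_{d_0\dots d_n}$ as $\widetilde F_{d_0\dots d_n}/G$ with $\widetilde F_{d_0\dots d_n}$ and $G$ both homogeneous of the same degree $\mu$ in $u_0,\dots,u_{r+1}$. Clearing $G$ from the equation of the generic fibre (which does not alter the underlying hypersurface, since $G$ is a nonzero scalar on each fibre over the open set where $G\ne 0$) yields an equation of the desired form after, if necessary, writing each monomial $x_0^{d_0}\cdots x_n^{d_n}$ as a product $\prod_{j=1}^d x_{i_j}$ and distributing the coefficient accordingly.

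Finally I would let $W'\subset\overline{W}'$ be the dense open subset over which $G$ does not vanish and the birational map $W\dashrightarrow\overline{W}'$ is an isomorphism, and define $\mathscr{X}'\subset W'\times\PP^n$ by the resulting equation \eqref{sist_eq_rat_fam}; this is automatically a family of hypersurfaces in the sense of Definition \ref{family_hypers}, since its coefficients define a morphism $W'\to\L_{n,d}$ (the polynomial entries, of common degree $\mu$, are sections of $\O_{W'}(\mu)$). The commuting diagram required by Definition \ref{birational_families} is built directly from the birational equivalence $W\dashrightarrow W'$ and the fact that, generically, the fibres of $\mathscr{X}\to W$ and $\mathscr{X}'\to W'$ are the \emph{same} hypersurface in $\PP^n$.

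The only delicate point is the simultaneous clearing of denominators on a non-linear variety: rational functions on $\overline{W}'$ are classes of quotients of homogeneous forms only modulo the ideal of $\overline{W}'$, so one must choose representatives compatibly. Multiplying by a single common $G$ as above, and restricting to the open set $\{G\ne 0\}\cap\overline{W}'$, is the clean way to do this; everything else is formal bookkeeping.
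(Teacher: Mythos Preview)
Your proof is correct and follows essentially the same approach as the paper's: replace $W$ by a birational hypersurface in $\PP^{r+1}$, compose the classifying map $g$ with the birational equivalence, and represent the resulting rational map to $\L_{n,d}$ by homogeneous forms of a common degree $\mu$ on a suitable open subset. You simply make explicit two steps the paper leaves implicit---the existence of the hypersurface model via the primitive element theorem, and the clearing of denominators to obtain sections of $\mathscr{O}_{W'}(\mu)$.
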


\begin{proof} To give the family $\mathscr{X} \rightarrow W$ is equivalent to give the corresponding morphism $g: W\to \L_{n,d}$. Let $\mathfrak W\subset \PP^{r+1}$ be a hypersurface with a birational map $h: \mathfrak W\dasharrow W$. Then $g'=g\circ h: \mathfrak W\dasharrow \L_{n,d}$ is a rational map, and there is a dense open subset $W'$ of $\mathfrak W$ where $g'$ is defined. Then we have a morphism $g': W'\to \L_{n,d}$ and accordingly we have a family $\mathscr{X}' \rightarrow W'$ that is birationally equivalent to $\mathscr{X} \rightarrow W$. On the other hand, giving $g': W'\to \L_{n,d}$ is equivalent to give a suitable $\binom{n+d}{n}$--tuple of elements $a_{i_1 \ldots i_d} \in H^0(W', \mathscr{O}_{W'}(\mu))$, for all $i_1, \ldots, i_d \in \{0, \ldots, n\}$ and some positive integer $\mu$, so that $\mathscr{X}'\subset W'\times \PP^n$ has equation \eqref {sist_eq_rat_fam}. 
\end{proof}

\subsection{} Next we want to prove the announced \emph{Section Lemma}. 

Let $\mathscr{X}\rightarrow W$ be a family of hypersurfaces of degree $d\geq 2$ in $\PP^n$. We denote by $F_k(\mathscr{X}) \rightarrow W$ the \emph{relative Fano scheme} of $k$--planes in $\PP^n$ contained in fibres of $\mathscr{X}\rightarrow W$. For any point $w\in W$, the fibre of $F_k(\mathscr{X}) \rightarrow W$ over $w$ is $F_k(X_w)$. 

We recall the following result (see \cite{Mor}):

\begin {theorem}\label{thm:mor} Let $k,n,d$ positive integers with $d\geq 2$ and
\begin{equation}\label{eq:mor}
 n \geq    \begin{cases}
      2k+1, & \text{if $d=2$ and $k\geq 2$}\\
 \frac {1}{k+1} {{k+d}\choose d}, & \text{otherwise}. 
    \end{cases}
\end{equation}
Then all hypersurfaces of degree $d$ in $\PP^n$ contain a $k$--plane.
\end{theorem}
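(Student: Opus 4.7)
The plan is to run a parameter count on the incidence correspondence
\[
\Phi = \{(\Lambda, [X]) \in \pgrass(k,n) \times \L_{n,d} : \Lambda \subset X\},
\]
with its two projections $p_1 : \Phi \to \pgrass(k,n)$ and $p_2 : \Phi \to \L_{n,d}$. First I would check that $p_1$ is surjective with linear fibers: for each $k$-plane $\Lambda$, the hypersurfaces of $\L_{n,d}$ containing $\Lambda$ form a linear subspace of codimension $\dim H^0(\Lambda, \mathcal O_\Lambda(d)) = \binom{k+d}{k}$. Hence $\Phi$ is irreducible and
\[
\dim \Phi = (k+1)(n-k) + \dim \L_{n,d} - \binom{k+d}{k},
\]
and the numerical hypothesis on $n$ makes the expected relative fiber dimension $e := (k+1)(n-k) - \binom{k+d}{k}$ non-negative.

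Since $\Phi$ is closed in a product of projective varieties, $p_2$ is proper, so its image is a closed subvariety of the irreducible $\L_{n,d}$. It is therefore enough to prove that $p_2$ is dominant. By the fiber-dimension theorem this is equivalent to exhibiting a single hypersurface $X_0$ of degree $d$ with non-empty $k$-Fano scheme $F_k(X_0)$ of dimension equal to $e$; then $\dim p_2(\Phi) \geq \dim \Phi - e = \dim \L_{n,d}$, forcing $p_2(\Phi)=\L_{n,d}$.

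For $d \geq 3$ I would take $X_0$ to be a Fermat-type hypersurface $\sum_{i=0}^n x_i^d = 0$ (or a closely related hypersurface obtained by grouping variables in blocks): the factorization $a^d + b^d = \prod_{\zeta^d = -1}(a + \zeta b)$ produces many explicit linear subspaces contained in $X_0$, and a direct combinatorial count of such $k$-planes, under the bound of the theorem, gives a family of the expected dimension $e$. For $d = 2$ and $k \geq 2$ the bound $n \geq 2k+1$ is the classical condition for a smooth quadric in $\PP^n$ to have a non-empty $k$-Fano scheme, an orthogonal Grassmannian of the expected dimension; singular quadrics are cones over lower-dimensional smooth quadrics and, when $n \geq 2k+1$, automatically contain $k$-planes through their vertex.

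The main obstacle is this penultimate step: producing an explicit $X_0$ whose $k$-Fano scheme attains the expected dimension $e$. Everything else is formal — the incidence parameter count is immediate, and once $X_0$ is in hand the fiber dimension theorem gives dominance of $p_2$, properness promotes this to surjectivity, and the conclusion that every hypersurface of degree $d$ in $\PP^n$ contains a $k$-plane follows. A natural alternative would be an induction on $d$, slicing a degree-$d$ hypersurface with a suitable hyperplane and invoking the result for degree $d-1$, but this tends to yield worse numerical bounds than the direct incidence computation.
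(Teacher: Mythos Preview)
The paper does not prove this theorem: it is quoted from Morin \cite{Mor} and used as a black box, so there is no in-paper argument to compare your proposal against.

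Your incidence-correspondence outline is the standard approach and the formal skeleton is correct: $\Phi$ is a projective bundle over $\pgrass(k,n)$, hence irreducible of the dimension you compute; $p_2$ is proper with closed image; and a single fiber of dimension at most $e$ forces $p_2$ to be surjective. You are also right that the only real content is the step you flag. Two cautions there. First, the Fermat hypersurface is a tempting test case, but producing many explicit $k$--planes on it is not the same as bounding $\dim F_k(X_0)$ \emph{above} by $e$, and Fermat hypersurfaces can have Fano schemes with excess-dimensional components; the more robust textbook route bypasses any specific $X_0$ and shows instead that $dp_2$ is surjective at a general point of $\Phi$, which reduces to a short normal-bundle computation (for general $X\supset\Lambda$ one checks $h^1(N_{\Lambda/X})=0$). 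Second, a numerical point: the bound \eqref{eq:mor} as printed does not always make $e\geq 0$ (take $k=1$, $d=3$, $n=2$: a smooth plane cubic contains no line), so your argument, which needs $e\geq 0$, would require the corrected bound $n\geq k+\tfrac{1}{k+1}\binom{k+d}{d}$. This discrepancy does not affect the paper itself, which only invokes the theorem under the much stronger hypothesis \eqref{condizione-k-piano}.
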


Next we consider $\mathscr{X} \rightarrow W$ a family of hypersurfaces of degree $d\geq 2$ in $\mathbb{P}^n$, with  $\dim(W)=r$. We will assume that
\begin{equation}\label{condizione-k-piano}
n>k+\frac{1}{k+1}\left[\binom{d+k}{k}d^r-1\right].
\end{equation}
Then clearly \eqref {eq:mor} holds, hence, by Theorem \ref {thm:mor}, the morphism $F_k(\mathscr{X}) \rightarrow W$ is surjective. By generic flatness, there is a dense open subset  of $W$ over which $F_k(\mathscr{X}) \rightarrow W$  is flat.

We are ready  to prove the \emph{Section Lemma}:

\begin{lemma}[The Section Lemma] \label{rat_sec_F}
Let $\mathscr{X} \rightarrow W$ be a family of hypersurfaces of degree $d\geq 2$ in $\mathbb{P}^n$, with  $\dim(W)=r$ so that \eqref {condizione-k-piano} holds. Then there is a dense open subset $U$ of $W$ such that over $U$ there is a section of  $F_k(\mathscr{X}) \rightarrow W$. \end{lemma}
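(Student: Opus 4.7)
The plan is to reduce the existence of a rational section to the Tsen--Lang theorem applied to the function field of $W$, via the explicit description of the Fano scheme obtained in Proposition \ref{fano-bi-rational}; this is, in substance, the modern incarnation of Conforto's idea.

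First, by Lemma \ref{description_fam} I may assume $W$ is a dense open subset of a hypersurface in $\PP^{r+1}$, so that the function field $K=\KK(W)$ has transcendence degree $r$ over the algebraically closed field $\KK$. Next, choose a general projection $\varphi\colon\G(k,n)\dashrightarrow\PP^{M(k,n)}$ as in Proposition \ref{fano-bi-rational}. Restricting to the generic point $\eta$ of $W$, the image $\varphi(F_k(X_\eta))$ is cut out in $\PP^{M(k,n)}_{K}$ by exactly $c=\binom{d+k}{k}$ forms $G_1,\ldots,G_c$ of degree $d$ in the homogeneous coordinates, with coefficients in $K$.

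At this point I invoke Tsen--Lang: a function field of transcendence degree $r$ over an algebraically closed field is a $C_r$-field, and any system of $c$ forms of degree $d$ in $N$ variables over a $C_r$-field admits a non-trivial common zero whenever $N>c\cdot d^r$. Rearranging \eqref{condizione-k-piano} gives exactly $M(k,n)+1>c\cdot d^r$, so $G_1,\ldots,G_c$ have a common $K$-rational zero in $\PP^{M(k,n)}_K$, i.e.\ $\varphi(F_k(X_\eta))$ has a $K$-point. Since $\varphi|_{F_k(X_\eta)}$ is birational by Proposition \ref{fano-bi-rational}, this $K$-point lifts to a $K$-point of $F_k(X_\eta)$, which is precisely a rational section of $F_k(\mathscr{X})\to W$ defined over some dense open $U\subseteq W$.

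The main obstacle is that the authors attribute the idea to Conforto \cite{conf:unisec}, suggesting they want a hands-on construction rather than a black-box application of Tsen--Lang. Such a construction would represent the sought section as a polynomial map $\sigma\colon\mathfrak{W}\dashrightarrow\PP^{M(k,n)}$ given by $M(k,n)+1$ forms in $u_0,\ldots,u_{r+1}$ of a common carefully chosen degree, and impose that the $c$ substituted forms $G_j(u,\sigma(u))$ vanish on $\mathfrak{W}$; one would then proceed by induction on $r$, eliminating one of the variables $u_i$ at each step. At each such reduction the number of conditions is multiplied by $d$ (this is where the factor $d^r$ enters the bound), and the base case $r=0$ collapses to the elementary fact that $c$ hypersurfaces of degree $d$ in $\PP^{M(k,n)}$ with $M(k,n)\geq c$ must intersect over $\KK$. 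Making this inductive reduction rigorous---keeping track of the degrees, the indeterminacy loci of $\sigma$, and the base change at each stage---is the technical heart of the Conforto-style approach and would be the main work of the proof.
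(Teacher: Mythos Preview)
Your Tsen--Lang argument is correct: $\KK(W)$ is a $C_r$-field, the $c=\binom{d+k}{k}$ forms of degree $d$ from Proposition~\ref{fano-bi-rational} live in $M(k,n)+1$ homogeneous variables, and \eqref{condizione-k-piano} is precisely the inequality $M(k,n)+1>c\,d^r$. This is, however, not the route the paper takes, and your guess in the last paragraph at the paper's method---an induction on $r$ eliminating one $u_i$ at a time---is also off target. Instead the paper carries out Conforto's direct parameter count with no induction: model $W$ as a hypersurface $\{\phi=0\}$ of some degree $\overline m$ in $\PP^{r+1}$, write the $M(k,n)+1$ coordinates of a putative section as forms $p_i=\sum_j\lambda_{i,j}\Psi_j$ of a large auxiliary degree $m$ on $W$ with indeterminate coefficients $\lambda_{i,j}$, substitute into the $c$ equations, and require each resulting form of degree $dm+\mu$ in $u_0,\ldots,u_{r+1}$ to be divisible by $\phi$ (introducing further indeterminates $\alpha$ for the cofactors). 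One then compares the number of unknowns $\lambda,\alpha$ with the number of imposed conditions; the difference is a polynomial in $m$ whose $m^{r+1}$-coefficient vanishes and whose $m^r$-coefficient is a positive multiple of $(k+1)(n-k)+1-c\,d^r$, so for $m\gg0$ the system is underdetermined exactly when \eqref{condizione-k-piano} holds. Since $(\lambda,\alpha)=(0,0)$ is always a solution, the solution locus has a positive-dimensional component through the origin, and that component cannot lie in $\{\lambda=0\}$ (because $\lambda=0$ forces $\alpha=0$), yielding an admissible section. In short, you invoke Tsen--Lang as a black box, while the paper essentially reproves the needed instance of it by an elementary asymptotic dimension count; the factor $d^r$ emerges from comparing the $m^r$-coefficients of $\binom{dm+\mu+r+1}{r+1}$ and $\binom{m+r+1}{r+1}$, not from $r$ inductive steps each contributing a factor $d$.
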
 

\begin{proof} Since the problem is birational in nature, 
by Lemma $\ref{description_fam}$ we may assume that $W$ is a dense open subset  of a hypersurface of degree $\overline{m}$ in $\mathbb{P}^{r+1}$ with equation
$$\phi(u_0, \ldots, u_{r+1})=0.$$

The domain $F_k(\mathscr{X})$ of the Fano family $F_k(\mathscr{X}) \rightarrow W$, that up to shrinking $W$ we may assume to be flat,  is contained in $W \times \mathbb{G}(k,n)$. Consider a general birational projection $\varphi: \pgrass(k,n) \dashrightarrow \pspace^{M(k,n)}$ as in Proposition \ref 
{map_fromGtoP}, that determines a birational map 
$$
\Phi: W\times  \mathbb{G}(k,n) \dasharrow W\times \PP^{M(k,n)}.
$$
By applying Corollary \ref {general_proj} and up to shrinking $W$, we may suppose that  for all $w\in W$, the restriction of $\Phi$ to any irreducible component of  $\{w\}\times F_k(X_w)$ is birational onto its image so that $\Phi$ restricts to a birational map of  $F_k(\mathscr{X})$ to its image, that we denote by $\mathbb{F}_k(\mathscr{X})$, 
contained in $W\times \PP^{M(k,n)}$. By Proposition \ref {fano-bi-rational} we may assume that $\mathbb{F}_k(\mathscr{X})$ is defined by the vanishing of $\binom{d+k}{k}$ equations in $W \times \pspace^{(M(k,n)}$ of the form
\begin{equation}\label{EQ_FAM_FANO}
\sum_{i_1, \ldots, i_d \in \{0, \ldots, n\}}b^\ell_{i_1\ldots i_d}(u_0, \ldots, u_{r+1})\prod_{j=1}^dy_{i_j}=0
\end{equation}
for $\ell=1, \ldots, \binom{d+k}{k}$, and $b^\ell_{i_1\ldots i_d}(u_0, \ldots, u_{r+1}) \in H^0(W, \mathscr{O}_{W}(\mu))$ for a suitable positive integer $\mu$, where the $y_{i}$'s denote  the homogeneous coordinates of $\pspace^{M(k,n)}$.

To prove the lemma we have to prove the existence of a rational section of $F_k(\mathscr{X}) \rightarrow W$ and it clearly suffices to find a rational section $p: W \dashrightarrow \mathbb{F}_k(\mathscr{X})$ of $\mathbb{F}_k(\mathscr{X})\rightarrow W$. Such a rational section is determined by  a suitable $(M(k,n)+1)$-tuple of rational functions on $W$. We may assume that each such rational function is expressed by a homogeneous polynomial in the variables $u_0, \ldots, u_{r+1}$ of a fixed degree $m$ modulo $\phi(u_0, \ldots, u_{r+1})$.

Supposing $m > \overline{m}$, we can choose $M$ independent elements in $H^0(W, \mathscr{O}_W(m))$, where
\begin{equation*}\label{ennegr}
M=\binom{m+r+1}{r+1}-\binom{m-\overline{m}+r+1}{r+1}.
\end{equation*}
These can be identified with $M$ forms $\Psi_1, \ldots, \Psi_M$ of degree $m$, modulo $\phi(u_0, \ldots, u_{r+1})$.

We want to construct a section $p$ by writing its homogeneous coordinates as linear combinations of the $\Psi$'s as above, i.e., by writing them as
\begin{equation*}\label{coord_pt}
p_i=\sum_{j=1}^M \lambda_{i,j} \Psi_j \text{ for $i=0, \ldots, M(k,n)$}
\end{equation*}
where we take the $\lambda_{i,j}$'s as indeterminates. The number of the $\lambda$'s is 
\begin{equation*}\label{num_lambda}
[(k+1)(n-k)+1]M=[(k+1)(n-k)+1]\left[\binom{m+r+1}{r+1}-\binom{m-\overline{m}+r+1}{r+1}\right].
\end{equation*}

We need to find the values of these $\lambda$'s so that $p$ is a section. For this, we have to replace the $y_i$'s in each of the equations \eqref{EQ_FAM_FANO} with the $p_i(u_0, \ldots, u_{r+1})$'s and we have impose that the results identically vanish on $W$, i.e.,  they must be forms in $\KK[u_0, \ldots, u_{r+1}]$ that are divisible by $\phi(u_0, \ldots, u_{r+1})$.

We make the substitution and for each $\ell=1, \ldots, \binom{d+k}{k}$ we have expressions of the sort 
\begin{multline}
\sum_{i_1, \ldots, i_d \in \{0, \ldots, n\}}b^\ell_{i_1\ldots i_d}(u_0, \ldots, u_{r+1})\prod_{j=1}^dp_{i_j}=\\
=\sum_{l_1+\ldots+l_{r+1}=dm+\mu}F^\ell_{l_0\cdots l_{r+1}}(\lambda_{i,j})u_0^{l_0}\cdots u_{r+1}^{l_{r+1}}
\end{multline}
where the homogeneous polynomials that we have after the substitution are of degree $dm+\mu$ with respect to $u_0, \ldots, u_{r+1}$ and the coefficients $F^\ell_{l_0\cdots l_{r+1}}$ are polynomials in the $\lambda$'s. 

Thus, for all $\ell=1, \ldots, \binom{d+k}{k}$, we have to impose that 
\begin{multline}\label{eq_forms}
\sum_{l_1+\ldots+l_{r+1}=dm+\mu}F^\ell_{l_0\cdots l_{r+1}}(\lambda_{i,j})u_0^{l_0}\cdots u_{r+1}^{l_{r+1}}=\\
=\phi(u_0, \ldots, u_{r+1})\left(\sum_{i_1+\ldots+i_{r+1}=dm-\overline{m}+\mu}\alpha^\ell_{i_0\cdots i_{r+1}}u_0^{i_0}\cdots u_{r+1}^{i_{r+1}}\right)
\end{multline}
where the $\alpha^\ell_{i_0\cdots i_{r+1}}$'s are again indeterminates. Their number is 
\begin{equation*}\label{num_alpha}
\binom{d+k}{k}\binom{dm-\overline{m}+\mu+r+1}{r+1}.
\end{equation*}

Now to prove the thesis we need to show that, under condition \eqref{condizione-k-piano}, there exists an \emph{admissible solution} of the system of non-homogeneous equations obtained by equating the coefficients of the monomials of degree $dm+\mu$ in \eqref{eq_forms} for each $\ell=1, \ldots, \binom{d+k}{k}$. A solution of this system is called \emph{admissible} if it gives rise to a section. Clearly, a solution is admissible if and only if not all the $\lambda$'s are equal to $0$.

In the system there are 
\begin{equation*}\label{num_eq}
\binom{d+k}{k}\binom{dm+\mu+r+1}{r+1}
\end{equation*}
equations in the $\alpha$'s and $\lambda$'s. The total amount of these variables is  
\begin{equation*}\label{dim_Aff}
\begin{array}{c}
\displaystyle
[(k+1)(n-k)+1]\left[\binom{m+r+1}{r+1}-\binom{m-\overline{m}+r+1}{r+1}\right]\\
[5mm]
\displaystyle
+\binom{d+k}{k}\binom{dm-\overline{m}+\mu+r+1}{r+1}.
\end{array}
\end{equation*}
We claim that if the number of variables is greater than the number of equations, i.e., if the following inequality holds
\begin{equation}\label{cond_underd}
\begin{array}{c}
\displaystyle
[(k+1)(n-k)+1]\left[\binom{m+r+1}{r+1}-\binom{m-\overline{m}+r+1}{r+1}\right] \\
[5mm]
\displaystyle
+\binom{d+k}{k}\binom{dm-\overline{m}+\mu+r+1}{r+1} > \binom{d+k}{k}\binom{dm+\mu+r+1}{r+1}
\end{array}
\end{equation}
our system has admissible solutions and we do have sections as required. 

In general, given a system of non-homogeneous equations, it is not true that if it is \emph{underdeterminate} (i.e., the number of equations is lower than the number of the variables) then the set of solutions is non-empty. However we do know that, in the associated affine space with coordinates the $\lambda$'s and the $\alpha$'s, the origin,  where all $\lambda$'s and all $\alpha$'s vanish, is a  solution of the system, although it does not give rise to an admissible solution. In any event,  this implies that the set of solutions has a component $\mathfrak S$ of positive dimension which contains the origin. Moreover, $\mathfrak S$ cannot be contained in the subspace defined by the vanishing of all the $\lambda$'s. Indeed, if all the $\lambda$'s are equal to $0$, from \eqref{eq_forms} it follows that also the $\alpha$'s are $0$. This proves that if \eqref {cond_underd} holds, there are admissible solutions and therefore there are sections as desired. 

Finally we want to see under which conditions, for $m$ large enough, \eqref{cond_underd} holds. This can be written as
\begin{eqnarray*}\label{cond_underd 2}
[(k+1)(n-k)+1]\left[\binom{m+r+1}{r+1}-\binom{m-\overline{m}+r+1}{r+1}\right]+ \notag \\
\binom{d+k}{k}\left[\binom{dm-\overline{m}+\mu+r+1}{r+1} - \binom{dm+\mu+r+1}{r+1} \right]> 0
\end{eqnarray*}

The term on the left is a polynomial in $m$: the condition in order that it is positive for $m \gg 0$ is that the leading coefficient is positive. The coefficient of the monomial $m^{r+1}$ of maximal degree is equal to zero, so we have to look at the coefficient of $m^r$. This equals
\[
\frac{[(k+1)(n-k)+1]}{(r+1)!}(r+1)\overline{m}+ \frac{\binom{d+k}{k}d^r}{(r+1)!}[-(r+1)\overline{m}].
\]
After dividing for the positive term $\frac{\overline{m}}{r!}$, we obtain
\[
(k+1)(n-k)+1-\binom{d+k}{k}d^r
\]
and being this positive is equivalent to \eqref{condizione-k-piano}.
\end{proof}

\begin{remark}	\label{rem:ratdet} Note that the result of the Section Lemma is equivalent to say that if \eqref {condizione-k-piano} holds, and if $w$ is the generic point of $W$, that is defined over of the field of rational functions $\KK(W)$, then one can find a $k$--plane  $\Lambda$ in the generic hypersurface $X_w$ of the family, that is also defined over $\KK(W)$. In this case one says that $\Lambda$ is \emph{rationally determined} on $X_w$. 
\end{remark}

\section{Unirationality of families of hypersurfaces}\label{SEC_unir}

In this section we use the previous results to give a criterion for the unirationality of families of hypersurfaces. We need some preliminaries.

\subsection{} We recall the following: 

\begin{definition}\label{raz_con_irraz}
Let $X\subset \PP^n$ be an algebraic variety defined over $\KK$ and $\Lambda$ a $k$--plane contained in $X$. One says that $X$ is $\Lambda$--\emph{rational} (resp. $\Lambda$--\emph{unirational}) if $X$ is $\KK(\Lambda)$--rational (resp. $\KK(\Lambda)$--unirational), where $\KK(\Lambda)$ is the extension of $\KK$ obtained by adding to $\KK$ the Pl{\"u}cker coordinates of $\Lambda$.
\end{definition} 

Let $\mathscr{X} \rightarrow W$ be a flat family of subvarieties of $\PP^n$ with $W$ an irreducible variety. If $w\in W$ we denote, as usual, by $X_w\subset \PP^n$ the fibre of 
$\mathscr{X} \rightarrow W$ over $w$. We assume that there is a dense open subset $U$ of $W$ such that for all $w\in U$, $X_w$ is irreducible. So, up to shrinking $W$, we may assume that this happens for all $w\in W$. Let $F_k(\mathscr{X})\rightarrow W$ be the \emph{relative Fano scheme} of $k$--planes  of  $\mathscr{X} \rightarrow W$. For all $w\in W$, the fibre of $F_k(\mathscr{X})\rightarrow W$ is $F_k(X_w)$. 

The following criterion is due to Roth (see \cite{Roth}):

\begin{proposition}[Roth's Criterion]\label{criterion}
Let $\mathscr{X} \rightarrow W$ be a flat family of varieties with $W$ an irreducible, unirational variety. Suppose that $F_k(\mathscr{X})\rightarrow W$ is dominant, so that, up to shrinking $W$ we may assume it is flat. Suppose that there is a section $s: W\to F_k(\mathscr{X})$ of $F_k(\mathscr{X})\rightarrow W$ such that there is a dense open subset 
$U$ of $W$ such that for all $w\in U$ the variety $X_w$ is $s(w)$--unirational. Then $\mathscr{X}$ is unirational. 

In addition, if $W$ is rational and for all $w\in U$ the variety $X_w$ is $s(w)$--rational, then $\mathscr{X}$ is rational. \end{proposition}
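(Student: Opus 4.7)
The plan is to pass to the generic point of $W$, reduce the unirationality of $\mathscr{X}$ to a problem over $K := \KK(W)$, and then compose with a unirational parametrization of $W$. The existence of the section $s : W \to F_k(\mathscr{X})$ means that the generic fibre $X_\eta \subset \PP^n_K$ contains a $k$--plane $\Lambda := s(\eta)$ defined over $K$, i.e., whose Pl\"ucker coordinates lie in $K$; consequently $K(\Lambda) = K$. Thus the $s(w)$--unirationality hypothesis, applied at the generic point (where it makes sense by spreading out from the dense open $U$), says exactly that $X_\eta$ is $K$--unirational: there is a dominant rational map $\pi : \PP^N_K \dashrightarrow X_\eta$ over $K$, with $N = \dim X_w$.

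Next I would spread $\pi$ out to obtain a $W$--rational map. The coefficients appearing in the defining data of $\pi$ lie in a finitely generated $\KK$--subalgebra $A \subset K$, whose spectrum is a dense open subset of $W$; up to shrinking $W$, this produces a dominant rational map of $W$--schemes
\[
\Pi : W \times \PP^N \dashrightarrow \mathscr{X},
\]
whose restriction over the generic point of $W$ is $\pi$. Using that $W$ is unirational of dimension $r$, pick a dominant rational map $\psi : \PP^r \dashrightarrow W$. The composition
\[
\Pi \circ (\psi \times \mathrm{id}_{\PP^N}) : \PP^{r} \times \PP^{N} \dashrightarrow \mathscr{X}
\]
is dominant, and since $\PP^r \times \PP^N$ is rational of dimension $r+N = \dim \mathscr{X}$, this proves the unirationality of $\mathscr{X}$. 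For the addendum, if in addition $W$ is rational and the $X_w$ are $s(w)$--rational, then both $\psi$ and $\pi$ can be chosen birational, hence so is $\Pi \circ (\psi \times \mathrm{id}_{\PP^N})$, giving rationality of $\mathscr{X}$.

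The delicate point is the passage from the pointwise assumption ``$X_w$ is $s(w)$--unirational for every $w \in U$'' to the generic assertion ``$X_\eta$ is $K$--unirational''. In the applications envisaged in this paper the fibrewise parametrization is constructed explicitly from the $k$--plane $\Lambda_w = s(w)$ via Predonzan's theorem, which depends algebraically on the Pl\"ucker coordinates of $\Lambda_w$; so the step is automatic, and the family of parametrizations $\pi_w$ arises by specializing a single $K$--rational map $\pi$. In full generality one reads the hypothesis of Roth's Criterion as the existence of such an algebraic family of parametrizations, i.e., precisely of the rational map $\Pi$ whose generic fibre witnesses the $K$--unirationality of $X_\eta$, and the rest of the argument is the simple composition above.
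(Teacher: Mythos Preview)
Your proof is correct and follows essentially the same line as the paper's: both construct a dominant rational map $\PP^r \times \PP^N \dashrightarrow \mathscr{X}$ by combining a unirational parametrization $\PP^r \dashrightarrow W$ with the fibrewise parametrizations $\PP^N \dashrightarrow X_w$ coming from the $s(w)$--unirationality hypothesis. You are in fact more explicit than the paper on the key point: the paper simply writes the map $(t,t') \mapsto \psi_{\phi(t)}(t')$ and asserts it is rational over $\KK$, whereas you articulate this via the generic point and a spreading--out argument, and you correctly flag that the passage from pointwise $s(w)$--unirationality to $K$--unirationality of $X_\eta$ relies on the parametrizations varying algebraically with the Pl\"ucker coordinates of $s(w)$, which is exactly how the criterion is used downstream via Predonzan's construction.
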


\begin{proof} We may assume that $U=W$. Let $\phi:\mathbb{P}^{r} \dashrightarrow W$ be the dominant map which assures the unirationality of $W$ and by $\psi_w:\mathbb{P}^{r'}_{\KK(s(w))} \dashrightarrow X_w$ the dominant map which assures the unirationality of $X_w$, for $w \in W$.

Then we can construct the map
\[
\mathbb{P}^r \times \mathbb{P}^{r'} \dashrightarrow \mathscr{X}
\]
such that the pair $(t,t')$ is sent to $\psi_{\phi(t)}(t')$. This is a rational dominant map, and it is defined over $\KK$. 

It follows furthermore that if $\phi$ and $\psi_w$ are generically finite of degree $a$ and $b$ respectively, then this map is generically finite of degree $a \cdot b$. The second assertion follows.
\end{proof}

\subsection{} In the paper \cite {Pre}, A. Predonzan proved the following:

\begin{theorem}\label{thm:pred} Let $X\subset \PP^n$ be an irreducible hypersurface of degree $d\geq 2$ defined over $\KK$. Suppose that $X$ contains a $k$--plane $\Lambda$ 
with 
$$
k\geq k(d)
$$
where $k(d)$ is inductively defined as follows
$$
k(d)={{k(d-1)+d-1}\choose {d-1}}, \quad k(2)=0.
$$
Suppose that $X$ is smooth along $\Lambda$. Then $X$ is $\Lambda$--unirational. \end{theorem}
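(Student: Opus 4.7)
I would argue by induction on $d$. In the base case $d=2$, one has $k(2)=0$, so $\Lambda$ is a $\KK$--rational point of the smooth quadric $X$; projection from $\Lambda$ is a birational isomorphism $X \dasharrow \PP^{n-1}$ defined over $\KK$, so $X$ is in fact $\Lambda$--rational.

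For the inductive step, I assume the theorem for degree $d-1$ with bound $k(d-1)$, and take $X \subset \PP^n$ of degree $d$ containing $\Lambda$ of dimension $k \geq k(d) = \binom{k(d-1)+d-1}{d-1}$, smooth along $\Lambda$. I would slice $X$ by the family of $(k+1)$--planes $\Pi$ containing $\Lambda$, parametrized by the rational variety $B := \PP(\PP^n/\Lambda) \cong \PP^{n-k-1}$. For each such $\Pi$, $\Lambda$ is a hyperplane in $\Pi \cong \PP^{k+1}$, so $X \cap \Pi = \Lambda + Y_\Pi$ as divisors, where $Y_\Pi \subset \Pi$ is the residual hypersurface of degree $d-1$, irreducible for generic $\Pi$. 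Every $q \in X \setminus \Lambda$ belongs to the unique $\Pi_q = \langle \Lambda, q \rangle$, so the $Y_\Pi$'s sweep out $X$.

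The crucial step is to produce, rationally in $\Pi$, a $k(d-1)$--plane $M(\Pi) \subset Y_\Pi$ along which $Y_\Pi$ is smooth, so that the inductive hypothesis gives an $M(\Pi)$--unirational parametrization of $Y_\Pi$ varying rationally in $\Pi$. My candidate for $M(\Pi)$ is an auxiliary $k(d-1)$--plane $\Lambda' \subset \Lambda$: the condition $\Lambda' \subset Y_\Pi$ is equivalent to $\Pi \subset T_pX$ for every $p \in \Lambda'$, i.e.\ to $\Pi \subset T := \bigcap_{p\in\Lambda'}T_pX$. The Gauss map $\Lambda' \to \check{\PP}^n$, $p \mapsto [T_pX]$, has components $\partial_0 f, \ldots, \partial_n f$, which are forms of degree $d-1$ on $\Lambda' \cong \PP^{k(d-1)}$, so its image lies in a linear subspace of $\check{\PP}^n$ of dimension at most $\binom{k(d-1)+d-1}{d-1}-1 = k(d)-1$. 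Hence $T$ has codimension at most $k(d)$ in $\PP^n$, and the subfamily of $\Pi \subset T$ through $\Lambda$ has dimension at least $n-k(d)-k-1$; for every such $\Pi$ one has $\Lambda' \subset Y_\Pi$, and generically $Y_\Pi$ is smooth along $\Lambda'$, so I may take $M(\Pi) = \Lambda'$.

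The main obstacle I anticipate is that, with $\Lambda'$ held fixed, $\bigcup_{\Pi \subset T} Y_\Pi$ has codimension $k(d)$ in $X$ and so does not dominate $X$. To remedy this I would also let $\Lambda'$ vary over the $\KK$--rational Grassmannian $\pgrass(k(d-1),\Lambda)$, of dimension $(k(d-1)+1)(k-k(d-1))$. The identity $k(d) = \binom{k(d-1)+d-1}{d-1}$ is calibrated precisely so that $(k(d-1)+1)(k-k(d-1)) \geq k(d)$ whenever $k \geq k(d)$, which is exactly the additional parametric room needed to make $\bigcup_{(\Lambda',\Pi)} Y_\Pi$ dominate $X$. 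Since $\pgrass(k(d-1),\Lambda)$ is $\KK$--rational, unirationality over the larger field obtained by adjoining the Pl\"ucker coordinates of the generic $\Lambda'$ descends to $\KK(\Lambda)$--unirationality. The most delicate checks remaining are the generic smoothness of $Y_\Pi$ along $\Lambda'$ (so that the inductive hypothesis genuinely applies) and the verification that the resulting rational map from a product of projective spaces to $X$ is indeed dominant.
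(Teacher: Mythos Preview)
The paper does not prove this theorem at all: it is quoted from Predonzan's article \cite{Pre} and used as a black box in the proof of Theorem~\ref{thm:pred2}. So there is no ``paper's own proof'' to compare your attempt against.

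That said, your sketch is a faithful outline of the classical Morin--Predonzan induction. The core mechanism---slicing $X$ by the pencil of $(k+1)$--planes $\Pi\supset\Lambda$, peeling off $\Lambda$ to get a residual $Y_\Pi$ of degree $d-1$, and observing that $\Lambda'\subset Y_\Pi$ exactly when $\Pi\subset\bigcap_{p\in\Lambda'}T_pX$---is correct, and your use of the Gauss map to bound $\codim T$ by $\binom{k(d-1)+d-1}{d-1}=k(d)$ is precisely the reason for the recursive definition of $k(d)$. Your observation that a fixed $\Lambda'$ only covers a codimension--$k(d)$ slice of the $\Pi$'s, and that letting $\Lambda'$ vary in $\pgrass(k(d-1),\Lambda)$ restores dominance, is also the right move; the dimension inequality $(k(d-1)+1)(k-k(d-1))\geq k(d)$ does follow from $k\geq k(d)$. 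One can equivalently phrase the dominance check fibrewise: for a general $\Pi$, the locus $Y_\Pi\cap\Lambda$ is a hypersurface of degree $d-1$ in $\Lambda\cong\PP^{k}$, and Theorem~\ref{thm:mor} guarantees it contains a $k(d-1)$--plane since $k\geq k(d)\geq\frac{1}{k(d-1)+1}\binom{k(d-1)+d-1}{d-1}$.

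The two points you flag as ``remaining checks'' are genuine but routine. Irreducibility of the generic $Y_\Pi$ follows, for instance, from the fact that the total space of the $Y_\Pi$'s over $B$ is birational to $X$ itself (the map $(\Pi,y)\mapsto y$ is birational off $\Lambda$), so the generic fibre is irreducible. Generic smoothness of $Y_\Pi$ along $\Lambda'$ is a second--order condition on $f$ which holds on a dense open subset of the incidence variety because $X$ is smooth along all of $\Lambda$; one has to check that the singular locus of $Y_\Pi$ meets $\Lambda$ in a proper subvariety of $Y_\Pi\cap\Lambda$ for general $\Pi$, which is a straightforward local computation. With these filled in, your argument goes through, and Roth's criterion (Proposition~\ref{criterion}) assembles the fibrewise unirational parametrizations over the rational base $I\subset\pgrass(k(d-1),\Lambda)\times B$ into a $\KK(\Lambda)$--unirational parametrization of $X$.
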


As a consequence of this result, Predonzan also proved in \cite {Pre} the following:

\begin{theorem}\label{thm:pred1} Let $X\subset \PP^n$ be an irreducible hypersurface of degree $d\geq 2$ defined over $\KK$, with a singular locus of dimension $t$. If
$$
n\geq \frac 1{k(d)+1}{{k(d)+d}\choose {d}}+k(d)+t+1
$$
then $X$ is unirational over an extension of $\KK$. 
\end{theorem}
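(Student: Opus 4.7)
My plan is to reduce the statement to Predonzan's Theorem \ref{thm:pred}: it is enough to exhibit a $k(d)$--plane $\Lambda\subset X$ along which $X$ is smooth, for then $X$ is $\Lambda$--unirational, hence unirational over the extension $\KK(\Lambda)$ of $\KK$.

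To produce such a $\Lambda$, I would use a generic linear slicing. Take a general linear subspace $L\subset \PP^n$ of codimension $t+1$, so $L\simeq \PP^{n-t-1}$. Since $\dim\Sing(X)=t$, a standard incidence dimension count shows that the general such $L$ satisfies $L\cap \Sing(X)=\emptyset$. Setting $X':=X\cap L$, the classical Bertini theorems (irreducibility and smoothness) give that $X'$ is an irreducible hypersurface of degree $d$ in $L$, and $\Sing(X')\subseteq \Sing(X)\cap L=\emptyset$, so $X'$ is smooth. The hypothesis on $n$ implies in particular
\[
n-t-1\ \geq\ \frac{1}{k(d)+1}\binom{k(d)+d}{d},
\]
which is exactly the bound needed to apply Morin's Theorem \ref{thm:mor} to $X'\subset \PP^{n-t-1}$ with $k=k(d)$. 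Therefore $X'$ contains a $k(d)$--plane $\Lambda$; since $\Lambda\subset L$ and $L\cap \Sing(X)=\emptyset$, we have $\Lambda\cap \Sing(X)=\emptyset$, so $X$ is smooth along $\Lambda$ and Theorem \ref{thm:pred} closes the argument.

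The main obstacle is not conceptual but the careful bookkeeping for the generic slice: one needs to verify that $L\not\subset X$, that $X\cap L$ has the expected dimension, that it is irreducible, and that $\Sing(X\cap L)=\Sing(X)\cap L$. All of this follows from standard generic-section arguments once one checks that the hypothesis on $n$ is amply sufficient to iterate Bertini's theorems. I note in passing that this argument actually proves the conclusion under the slightly milder bound $n\geq \frac{1}{k(d)+1}\binom{k(d)+d}{d}+t+1$; the extra $+k(d)$ in the stated hypothesis presumably reflects Predonzan's original, less economical, approach.
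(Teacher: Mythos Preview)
Your argument is correct and matches the paper's approach: the paper does not prove Theorem~\ref{thm:pred1} separately (it is quoted as Predonzan's), but its proof of the more general Theorem~\ref{thm:pred2}, specialised to $r=0$, is precisely your slice--then--apply--Theorem~\ref{thm:pred} strategy, with the Section Lemma~\ref{rat_sec_F} playing the role of Theorem~\ref{thm:mor}. One minor simplification: you do not actually need $X'=X\cap L$ to be irreducible, since Theorem~\ref{thm:mor} applies to \emph{all} degree~$d$ hypersurfaces and it is $X$, not $X'$, that is fed into Theorem~\ref{thm:pred}; your closing remark about the redundant $+k(d)$ is also correct.
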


This result has been rediscovered in \cite {HMP}, although with a worse lower bound for $n$. 
Our aim is to prove the following extension of Theorem \ref  {thm:pred1}:

\begin{theorem}\label{thm:pred2} Let $\mathscr{X}\to W$ be a family of hypersurfaces of degree $d\geq 2$ in $\PP^n$, with $W$ irreducible, unirational of dimension $r$. Assume that
if $w\in W$ is the generic point, then $X_w$ is irreducible with a singular locus of dimension $t$. If
\begin{equation}\label{condizione-k-piano-bis}
n>k(d)+\frac{1}{k(d)+1}\left[\binom{d+k(d)}{k(d)}d^r-1\right]+t+1.
\end{equation}
then $\mathscr{X}$ is unirational. 
\end{theorem}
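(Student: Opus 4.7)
The plan is to reduce to the single--hypersurface case covered by Predonzan (Theorem \ref{thm:pred}) by exploiting the $t+1$ extra codimension in \eqref{condizione-k-piano-bis} to cut away the singular locus before invoking the Section Lemma. Concretely, I would produce a rationally determined $k(d)$--plane on the generic fibre that is automatically disjoint from the singular locus, and then close the argument with Roth's Criterion.

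First, I would fix a general $(n-t-1)$--dimensional linear subspace $M\subset \PP^n$. Since $\dim \Sing(X_w)\leq t$ for $w$ in a dense open subset of $W$ and $\codim_{\PP^n}M=t+1$, generic choice of $M$ guarantees $M\cap \Sing(X_w)=\emptyset$ for $w$ in a (still dense) open subset of $W$, which after shrinking I still call $W$. Note that \eqref{condizione-k-piano-bis} forces $n\geq t+3$, so by Bertini's irreducibility theorem $X'_w:=X_w\cap M$ is an irreducible hypersurface of degree $d$ in $M\cong \PP^{n-t-1}$ for generic $w$. Composing the classifying morphism $W\to \L_{n,d}$ of $\mathscr{X}$ with the restriction morphism $\L_{n,d}\dashrightarrow \L_{n-t-1,d}$ (well defined since $X_w\not\supset M$ for dimensional reasons) gives a family of hypersurfaces $\mathscr{X}'\to W$ in $\PP^{n-t-1}$ in the sense of Definition~\ref{family_hypers}.

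Next, I would apply the Section Lemma \ref{rat_sec_F} to the family $\mathscr{X}'\to W$ with $k=k(d)$ in the ambient $\PP^{n-t-1}$. The required condition \eqref{condizione-k-piano} in this setting reads
$$
n-t-1 \;>\; k(d)+\frac{1}{k(d)+1}\Big[\binom{d+k(d)}{k(d)}d^r-1\Big],
$$
which is exactly \eqref{condizione-k-piano-bis}. Hence, up to shrinking $W$, there is a rational section $s:W\dashrightarrow F_{k(d)}(\mathscr{X}')$ whose value $\Lambda_w$ at the generic point is a rationally determined $k(d)$--plane contained in $X'_w\subseteq M$. Because $\Lambda_w\subset M$ and $M\cap \Sing(X_w)=\emptyset$, the ambient hypersurface $X_w$ is smooth along $\Lambda_w$, and $s$ is simultaneously a rational section of the relative Fano scheme $F_{k(d)}(\mathscr{X})\to W$. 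Theorem \ref{thm:pred} then applies with $\Lambda=\Lambda_w=s(w)$ to yield that $X_w$ is $s(w)$--unirational for $w$ in a dense open subset of $W$, and since $W$ is unirational, Roth's Criterion (Proposition~\ref{criterion}) gives the unirationality of $\mathscr{X}$.

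There is no substantive obstacle in this scheme: the bound \eqref{condizione-k-piano-bis} is engineered precisely so that the loss of $t+1$ dimensions when cutting by $M$ still leaves room for the Section Lemma in the residual $\PP^{n-t-1}$, and every geometric condition needed (irreducibility of $X'_w$, disjointness of $M$ from $\Sing(X_w)$, well--definedness of the restriction morphism) is a standard Bertini/genericity verification. The only conceptual step is recognizing that the extra $t+1$ summand in the hypothesis is exactly what enables the reduction, after which Predonzan's theorem and Roth's criterion do the rest.
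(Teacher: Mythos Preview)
Your proposal is correct and follows essentially the same route as the paper: cut the family by a general $(n-t-1)$--plane to kill the singular locus, apply the Section Lemma \ref{rat_sec_F} in the residual $\PP^{n-t-1}$ to obtain a rationally determined $k(d)$--plane along which the original fibres are smooth, and then conclude via Predonzan's Theorem \ref{thm:pred} and Roth's Criterion. The only cosmetic difference is that you invoke Bertini for the irreducibility of $X'_w$, whereas this already follows from smoothness of $X'_w$ (since $M$ misses $\Sing(X_w)$) once $n-t-1\geq 2$.
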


\begin{proof} By the hypotheses, up to shrinking $W$ we may assume that for all $w\in W$ the hypersurface $X_w\subset \PP^n$ is irreducible with singular locus of dimension $t$. Again up to shrinking $W$, we may assume that there is a $(n-t-1)$--plane $P$ in $\PP^n$ such that for all $w\in W$, the intersection of $X_w$ with $P$ is smooth. In this way we get a new family $\mathscr{X}'\to W$ of hypersurfaces of degree $d$ in $\PP^{n-t-1}$ such that for all $w\in W$, $X'_w$ is the intersection of $X_w$ with $P$. 

Taking into account \eqref {condizione-k-piano-bis}, by the Section Lemma \ref {rat_sec_F},  up to shrinking $W$ we may assume there is a section $s$ of $F_{k(d)}(\mathscr{X}')\to W$. Note that for all $w\in W$, $X'_w$ is smooth, and therefore $X_w$ is smooth along $s(w)$. Then, by Theorem \ref {thm:pred}, for all $w\in W$,  $X_w$ is  $s(w)$--unirational. Thus, by applying Roth's Criterion  \ref {criterion}, the assertion follows. \end{proof} 

We notice that if $d=2$ then Theorem \ref {thm:pred2} is basically the main result of \cite {conf:unisec}.

\end{document}